\theoremstyle{plain}
\newtheorem{theorem}{Theorem}[section]
\newtheorem{lemma}[theorem]{Lemma}
\newtheorem{proposition}[theorem]{Proposition}
\newtheorem{remark}[theorem]{Remark}
\newtheorem{problem}{Problem}
\theoremstyle{definition}
\newtheorem{definition}[theorem]{Definition}
\newtheorem{assumption}[theorem]{Assumption}
\theoremstyle{remark}
\newcommand{\Romannum}[1]{{_{\uppercase\expandafter{\romannumeral#1}}}} 
\newcommand{\rev}[1]{\textcolor{black}{#1}}
\begin{document}

\begin{frontmatter}

\title{An Intrinsic Approach to Formation Control of Regular Polyhedra for Reduced Attitudes\thanksref{footnoteinfo}}

\thanks[footnoteinfo]{This paper was not presented at any IFAC
meeting.
}

\author[kth]{Silun Zhang}\ead{silunz@kth.se},
\author[hit]{Fenghua He}\ead{hefenghua@hit.edu.cn},
\author[amss]{Yiguang Hong}\ead{yghong@iss.ac.cn},
\author[kth]{Xiaoming Hu}\ead{hu@kth.se}

\address[kth]{Optimization and Systems Theory, Department of Mathematics, KTH Royal Institute of Technology, 100 44 Stockholm, Sweden}
\address[hit]{Control and Simulation Center, Harbin Institute of Technology, 150001 Harbin, China}
\address[amss]{Key Laboratory of Systems and Control, Academy of Mathematics and Systems Science, Chinese Academy of Sciences, 100190 Beijing, China}

\begin{keyword}
Attitude control; distributed control; formation control; nonlinear systems.
\end{keyword}

\begin{abstract}                          
This paper addresses formation control of reduced attitudes in which a
continuous control protocol is proposed for achieving and stabilizing all regular polyhedra (also known as Platonic solids) under a unified
framework. The protocol contains only relative reduced attitude measurements and does not depend on any particular parametrization as is usually used in the literature. A key feature of the control proposed is that it is intrinsic in the sense that it does not need to incorporate any information of the desired formation. Instead, the achieved formation pattern is totally attributed to the geometric properties of the space and the designed inter-agent connection topology. Using a novel coordinates transformation, asymptotic stability of the desired formations is proven by studying stability of a constrained nonlinear system. In addition, a methodology to investigate stability of such constrained systems is also presented.
\end{abstract}

\end{frontmatter}

\section{Introduction}
In the last decades coordination of multi-agent systems has emerged as
a significant research topic across the control communities.
This research tendency evolves from synchronization further towards more flexible collective behaviors, among which cooperative formation is an important one with a diverse range of engineering applications, such as formation flying \cite{porfiri2007tracking,Ren03ACC}, sensor placement \cite{cortes2002coverage}, and spatial exploration \cite{curtis1999magnetospheric}. A similar development trend has also taken place in the attitude control area.

Attitude control has many applications and was partly motivated by
aerospace developments in the middle of the last century~\cite{Bower1964,Kowalik1970}.
A well-known result on the controllability of attitude systems states
that no continuously differentiable feedback control can
asymptotically stabilize the attitude of a spacecraft with only two
actuators \cite{Byrnes1991}. However, in this under-actuated scenario
a smooth feedback can be derived to entail asymptotic stability with
respect to two axis and rotating about the third axis on the
closed-loop system \cite{Tsiotras1994}. Inspired by this two-axis
stabilization of attitude systems, \cite{bullo1995control} proposes
the \emph{reduced attitude} control problem, where only the pointing
direction of a body-fixed axis is considered, while any rotation about
this axis is ignored. This model is then shown to be a proper
framework for many applications, such as control of antenna orientation for satellites
\cite{Tsiotras1994,chaturvedi2011rigid} and viewing field for cameras \cite{Wang13PRL}.
Another reason for the name
of reduced attitude is that to the contrary of full attitude that
evolves in a 3-dimensional Lie group $\mathcal{SO}(3)$, reduced attitude has one less degree of freedom with configuration space $\mathcal{S}^2$.

In attitude control study, there has been an increasing research interest in
attitude formation missions. Based on a
spherical parametrization, \cite{paley2009stabilization} addresses the
formation problem in $\mathcal{S}^2$, in which absolute state
measurements are however required.
 \cite{lawton2002synchronized} and \cite{min2011distributed} propose
 a leader-follower formation control scheme based on the
 parametrizations of unit-quaternions and modified Rodrigues
 parameters respectively, but the relative errors between leaders and
 followers are identified by the difference of their parametrization
 variables, and the control implementation also needs the absolute
 attitude information.
 \rev{To overcome the drawback caused by parametrizations, \cite{WJ15ac} provides a reduced-attitude formation control scheme directly in $\mathcal{S}^2$ space. Moreover, such control protocol is in a so-called intrinsic manner which does not require any formation errors in control law and the desired formation patterns are constructed totally based on the geometric properties of the configuration space and the designed connection topology. }

Platonic solids are the only five regular polyhedra in 3-dimensional space, of which formations control have many promising applications \cite{roscoe2013satellite,gracias2000forming,hughes2008formation}.
\rev{
This is because Platonic solids possess the most symmetries among all polyhedra, which leads to that such formations can facilitate the achievement of maximal observational effectiveness \cite{roscoe2013satellite}, for example in NASA's Magnetospheric Multiscale mission \cite{hughes2008formation}, the Glassmeier's quality metric \cite{dunlop1998multi} is maximal when the four spacecrafts form a regular tetrahedron configuration. Another engineering application of regular tetrahedron formation is the Cluster II mission launched by the ESA \cite{credland1997cluster}. Moreover, in computer vision field regular polyhedron configurations are also used in motion capture applications with a panoramic field of view \cite{pless2003using,kim2010spherical,goldberger2005reconstructing}, for example when the orientations (reduced attitudes) of six cameras forms a regular octahedron known as the "Argus Eye" system \cite{pless2003using}, more accurate estimation for target's motion is obtained.
}

\rev{
In this paper, we continue our work \cite{zhang2017tetrahedron} to further achieve the intrinsic formation control
for all regular polyhedra in $\mathcal{S}^2$ under a unified framework.
Compared with the existing results, the main contributions of the current work focus on:
\vspace{-15pt}
\begin{enumerate}
\item A unified characterization of regular polyhedra is proposed based on the particular rotational symmetries (so-called polyhedral group) possessed by such solids;
\item Since in intrinsic formation control the desired configuration is partially encoded in the inter-agent topology, we propose a symmetry-based methodology for designing suitable inter-agent graphs, by which a family of possible topologies are accordingly given;
\item In contrast with \cite{zhang2017tetrahedron}, under the proposed coordinates the resulting dynamics entails far more algebraic constraints on the state space.
To address such a problem, the concept of exponential stability of a system restricted to a manifold is proposed, which is a systematic approach to obtaining exponential stability of a system subject to algebraic constraints.
\end{enumerate}
\vspace{-15pt}
By leveraging these tools, we show that it is sufficient to investigate stability of a  far less constrained system to obtain stability of each desired formation.
Besides, it is worthwhile to mention that the proposed
control protocol only contains relative reduced attitude measurements,
and does not depend on any parametrization usually used in the
attitude control.
}


\section{Notation and Preliminary}\label{sec:pre}
In this paper, the topology of inter-agent connectivity is modeled by
a graph $\mathcal{G}=(\mathcal{V},\mathcal{E})$, where the set of
nodes is $\mathcal{V}=\big\{1,\ldots,N\big\}$, and $\mathcal{E}\subset \mathcal{V} \times \mathcal{V}$ is the edge set. A graph $\mathcal{G}$ is said to be undirected if $(j,i) \in \mathcal{E}$, for every $(i,j) \in \mathcal{E}$. The adjacency matrix of an undirected graph $\mathcal{G}$ is defined by a matrix $A_\mathcal{G}=\big[a_{ij}\big]_{i,j \in \mathcal{V}} \in \mathbb{R}^{N \times N}$ such that its entry $a_{ij}=1$ if $(j,i) \in \mathcal{E}$, otherwise $a_{ij}=0$. We also define the neighbor set of node $i$ as $\mathcal{N}_i= \big\{ j:(j,i) \in \mathcal{E}\big\}$, and we say $j$ is a neighbor of $i$, if $j \in \mathcal{N}_i$. We denote $Card(S)$ as the cardinality of a set $S$ and $\mathbf{I}_{n}$ as the identity matrix with dimension $n$. The symbol $\mathbb{Z}_n$ is reserved for the integer set $\{1, \dots, n\}$ and $GL(n,\mathbb R)$ for the general linear group of degree $n$ over $\mathbb R$.

\subsection{Attitude and Reduced Attitude}
In this work, we consider the formation problem for reduced attitudes of $N$ rigid bodies.
\rev{
The \emph{reduced attitude} is devoted to the applications wherein
the pointing direction of a body-fixed axis is concerned. Let
$\boldsymbol{b}_i\in \mathcal{S}^2$ denote the coordinates of agent
$i$'s pointing axis relative to the body frame $\mathcal{F}_i$, where $\mathcal{S}^2=\big\{x\in\mathbb{R}^{3}: \|x\|=1\big\}$.
Then the coordinates of $\boldsymbol{b}_i$ relative to the inertial frame $\mathcal{F}_w$ is $\Gamma_i=R_i\boldsymbol{b}_i$, in which $R_i \in \mathcal{SO}(3)$ is the attitude matrix of agent $i$ relative to $\mathcal{F}_w$.
This $\Gamma_i \in \mathcal{S}^2$ specifies the pointing direction of axis $\boldsymbol{b}_i$ and has one dimension less than the full attitude, thus is said to be the \emph{reduced attitude} of rigid body $i$.
}

 In the inertial frame $\mathcal{F}_w$, the kinematics of the reduced attitude $\Gamma_i$ is governed by \cite{ZXLi}
 \begin{equation}\label{eq:pre:DGamma}
        \dot{\Gamma}_i = \widehat{\omega}_i \Gamma_i ,
\end{equation}
where $\omega_i \in \mathbb{R}^{3}$ is agent $i$'s angular velocity relative to frame $\mathcal{F}_w$, and the hat operator $\widehat{(\cdot)}$ is the linear operator of the cross product defined as $\widehat x y=x\times y$, for any $x, y\in \mathbb{R}^{3}$.
%
We note that $\widehat{x} \in \mathfrak{so}(3)$, where $\mathfrak{so}(3)$ is the Lie algebra of $\mathcal{SO}(3)$ consisting of all skew symmetric matrices.
%

For any two points $\Gamma_i, \Gamma_j \in \mathcal{S}^2$, we define angle $\theta_{ij} \in [0, \pi]$ and vector $k_{ij} \in \mathcal{S}^2$ as
$$ \theta_{ij}=\arccos(\Gamma_i^T \Gamma_j), \;\; k_{ij}=\frac{\widehat{\Gamma}_i \Gamma_j}{\sin(\theta_{ij})}.$$
In the definition of $k_{ij}$, we stipulate $k_{ij}$ to be any unit vector orthogonal to $\Gamma_i$ when $\theta_{ij}=0$ or $\pi$.

We note that $\theta_{ij}$ is as well the geodesic distance between $\Gamma_i$ and $\Gamma_j$ in $\mathcal{S}^2$, and we have $\Gamma_j= \exp(\theta_{ij} \widehat{k}_{ij})\Gamma_i$ for $\Gamma_i, \Gamma_j \in \mathcal{S}^2$.
The next formula states the relationship between three reduced attitudes, which is also referred to as the spherical cosine formula \cite{Todhunter_Identity}:
\begin{lemma}\label{Thm:PR:cosine}
  For any three reduced attitudes $\Gamma_i, \Gamma_j$, $\Gamma_k \in \mathcal{S}^2$, the following relationship always holds:
$$\cos(\theta_{ij})=\cos(\theta_{ik})\cos(\theta_{jk})+\sin(\theta_{ik})\sin(\theta_{jk})k_{ik}^Tk_{jk}.$$
\end{lemma}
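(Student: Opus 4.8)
The plan is to reduce the entire identity to a single application of the vector identity for the inner product of two cross products. Observe that the only non-elementary quantity on the right-hand side is the term $\sin(\theta_{ik})\sin(\theta_{jk})\,k_{ik}^T k_{jk}$; everything else is immediately expressible through inner products since $\Gamma_a^T\Gamma_b=\cos(\theta_{ab})$ by the definition of the angles. So first I would substitute the definitions $k_{ik}=\widehat{\Gamma}_i\Gamma_k/\sin(\theta_{ik})$ and $k_{jk}=\widehat{\Gamma}_j\Gamma_k/\sin(\theta_{jk})$ directly into this term, which cancels the two sine factors and leaves the bare scalar product $(\widehat{\Gamma}_i\Gamma_k)^T(\widehat{\Gamma}_j\Gamma_k)=(\Gamma_i\times\Gamma_k)\cdot(\Gamma_j\times\Gamma_k)$.

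The core computation is then a one-line calculation. Using the Binet--Cauchy (Lagrange) identity $(\mathbf a\times\mathbf b)\cdot(\mathbf c\times\mathbf d)=(\mathbf a\cdot\mathbf c)(\mathbf b\cdot\mathbf d)-(\mathbf a\cdot\mathbf d)(\mathbf b\cdot\mathbf c)$ with $\mathbf a=\Gamma_i$, $\mathbf b=\mathbf d=\Gamma_k$, $\mathbf c=\Gamma_j$ gives
$$(\Gamma_i\times\Gamma_k)\cdot(\Gamma_j\times\Gamma_k)=(\Gamma_i^T\Gamma_j)(\Gamma_k^T\Gamma_k)-(\Gamma_i^T\Gamma_k)(\Gamma_k^T\Gamma_j).$$
Since $\|\Gamma_k\|=1$ we have $\Gamma_k^T\Gamma_k=1$, and rewriting each inner product as the cosine of the corresponding angle turns the right-hand side into $\cos(\theta_{ij})-\cos(\theta_{ik})\cos(\theta_{jk})$. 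As the sine factors have already cancelled, the term $\sin(\theta_{ik})\sin(\theta_{jk})\,k_{ik}^T k_{jk}$ equals $\cos(\theta_{ij})-\cos(\theta_{ik})\cos(\theta_{jk})$, and moving $\cos(\theta_{ik})\cos(\theta_{jk})$ to the other side yields exactly the claimed formula.

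The only point that needs care --- and the step I expect to be the genuine (if minor) obstacle --- is the degenerate configuration where $\theta_{ik}\in\{0,\pi\}$ or $\theta_{jk}\in\{0,\pi\}$, so that $\sin(\theta_{ik})$ or $\sin(\theta_{jk})$ vanishes and $k_{ik}$ (resp.\ $k_{jk}$) is only defined as an arbitrary orthogonal unit vector. Here I would argue separately: the factor $\sin(\theta_{ik})\sin(\theta_{jk})$ annihilates the bounded term $k_{ik}^T k_{jk}$, so the last summand is $0$ regardless of the arbitrary choice, and the identity collapses to $\cos(\theta_{ij})=\cos(\theta_{ik})\cos(\theta_{jk})$. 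This remaining equality is then checked directly from $\Gamma_i=\pm\Gamma_k$ (when $\theta_{ik}=0$ or $\pi$), which forces $\theta_{ij}=\theta_{jk}$ or $\theta_{ij}=\pi-\theta_{jk}$ and hence the matching sign $\cos(\theta_{ij})=\pm\cos(\theta_{jk})=\cos(\theta_{ik})\cos(\theta_{jk})$; the symmetric subcase for $\theta_{jk}$ is handled identically. With both the generic and the degenerate cases covered, the formula holds for all three reduced attitudes in $\mathcal{S}^2$.
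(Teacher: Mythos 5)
Your proof is correct. The paper does not actually prove Lemma \ref{Thm:PR:cosine}; it simply cites it as the spherical cosine formula from the literature, so there is no in-paper argument to compare against. Your reduction of the cross term to $(\Gamma_i\times\Gamma_k)\cdot(\Gamma_j\times\Gamma_k)$ followed by the Binet--Cauchy identity with $\Gamma_k^T\Gamma_k=1$ is the standard and cleanest derivation, and you are right that the only point requiring care is the degenerate case $\theta_{ik}\in\{0,\pi\}$ or $\theta_{jk}\in\{0,\pi\}$, where $k_{ik}$ or $k_{jk}$ is only defined up to an arbitrary choice; your observation that the vanishing sine factor makes that choice irrelevant, together with the direct check of $\cos(\theta_{ij})=\cos(\theta_{ik})\cos(\theta_{jk})$ when $\Gamma_i=\pm\Gamma_k$, closes the argument completely.
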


In this paper, we will also use a frequently mentioned parametrization of $\Gamma_i$ based on the RPY angles system \cite{Sciavicco_Book},
\begin{gather}\label{eq:pre:RPY}
  \Gamma_i=[\cos(\psi_i)\cos(\phi_i),\sin(\psi_i)\cos(\phi_i),\sin(\phi_i)]^T
\end{gather}
where $\psi_i \in [-\pi,\pi)$, $\phi_i \in [-\pi/2,\pi/2]$.

\subsection{Regular Polyhedra}\label{Sec:fiveRegularPolyhedra}
A convex polyhedron is said to be regular if its faces are identical regular polygons and its vertices are all surrounded by the same pattern. The regular polyhedra can be identified by the \emph{Schl\"{a}fli symbol}, according to which a polyhedron with the $p$-sided regular polygon faces and the vertices surrounded by $q$ such faces is denoted by $\{p, q\}$, where $p,q \in \mathbb{Z}$. At every vertex of $\{p,q\}$, there are $q$ face-angles in the size of $\pi-2\pi/p$  \cite{coxeter1973regular}, and the sum of these $q$ angles are less than $2\pi$. Therefore, we have the inequality
\begin{equation}\label{eq:pre:relation_p_q}
1/p+1/q>1/2.
\end{equation}
This inequality leads to that there exist only five possible combinations of $p$ and $q$
as shown in Fig. \ref{Fig:Int:PlatonicSolid}. They are also referred to as the five \emph{Platonic solids}.


We denote the number of vertices, edges and faces of $\{p,q\}$ as $N_0^{\{p,q\}}$,$N_1^{\{p,q\}}$ and $N_2^{\{p,q\}}$ respectively. For simplicity, if there is no ambiguity in the context, we omit the superscript $\{p,q\}$ in these notations. By Euler's formula \cite{peter1999polyhedra}, we have $N_0=4p/d$, $N_1=2pq/d$ and $N_2=4q/d$, where $d=4-(p-2)(q-2)$.

\subsection{Permutation and Permutation Matrix}
\rev{
Given a finite set $S$, we define a permutation $\sigma$ of $S$ as a bijective mapping from $S$ to itself, i.e., $\sigma: S\to S$. Let $\sigma$ and $\pi$ be two permutations of $S$, then the product $\sigma \cdot \pi $ is defined by $\sigma \cdot \pi (s)=\sigma(\pi(s))$, $\forall s \in S$. Endowed with the operation of such a product, the class of all permutations of a finite set forms a group.
}

\rev{
We denote a \textit{cycle} of permutation $\sigma$ as $(s_1, s_2, \dots, s_m)$ which is a group orbit satisfying $s_i \in S$, $s_{i+1}=\sigma(s_i)$ for $i=1, \dots, m-1$ and  $s_{1}=\sigma(s_m)$. Two cycles are said to be disjoint if they do not have any common elements. It can be shown that any permutation on a finite set admits a unique cycle decomposition consisting of mutually disjoint cycles whose union is $S$. For this reason, permutation $\sigma$ can be identified as a product of disjoint cycles which is called \textit{cycle notation}. For example, the notation $(1)(2, 3, 4)$ presents the permutation defined by $\sigma(1)=1, \sigma(2)=3, \sigma(3)=4, \sigma(4)=2$.
}

\rev{
For a permutation specified by mapping $\sigma: S \to S$, we define its permutation matrix by $P_\sigma=[\mathbf{e}_{\sigma(1)},\cdots,\mathbf{e}_{\sigma(N_0)}]^T$, where $\mathbf{e}_i$ represents the $i$-th column of identity matrix $\mathbf{I}_{Card(\mathcal{S})}$. We note that the permutation matrix $P_\sigma$ is an orthogonal matrix, i.e. $P_\sigma P_\sigma^T=\mathbf{I}$.
}

\section{Reduced Attitude Control}\label{sec:pformu}
In this paper, we focus on an intrinsic formation control for reduced
attitudes, which implies that in contrast to most existing work the control protocol contains no formation error which is the difference of the current formation from the desired one. Instead, the constructed formation pattern is totally attributed to the geometric properties of the compact manifold $\mathcal{S}^2$ and the designed connection topology.

In our previous work \cite{WJ15ac}, an intrinsic control law only containing the relative attitude $\{\widehat{\Gamma}_i\Gamma_j:j\in \mathcal{N}_i\}$ is proposed to reach antipodal and cyclic formations under the ring-graph topology. Here, a similar but slightly modified control is employed for Platonic solid formations as
\begin{equation}\label{eq:PF:Control}
  \omega_i=-\sum_{j\in \mathcal{N}_i} h(\theta_{ij})\widehat{\Gamma}_i\Gamma_j, \;\quad i\in \mathcal{V},
\end{equation}
where $\mathcal{V}=\{1, \cdots, N_0\}$, $h: \mathbb{R} \to \mathbb{R}$ is a real functional satisfying that the function composition 
$\bar{h}=h \circ \arccos$ is Lipschitz.
Substituting (\ref{eq:PF:Control}) into the kinematics (\ref{eq:pre:DGamma}), the closed-loop system reads
\begin{equation}\label{eq:PF:ClosedSystem}
  \dot{\Gamma}_i=\widehat{\Gamma}_i \sum_{j\in \mathcal{N}_i} h(\theta_{ij})\widehat{\Gamma}_i\Gamma_j, \; i\in \mathcal{V}.
\end{equation}

\rev{
We note that control law (\ref{eq:PF:Control}) is independent of any information of global initial frame $\mathcal{F}_w$. In practice the controller of a rigid body is almost always implemented in the body frame, since rotational actuators, such as momentum wheels, are always installed fixed to the body. In the body frame, we have
\begin{equation}\label{eq:PF:controlinbody}
\omega_i^b=-\sum\limits_{j\in N_i} h(\theta_{ij}) b_i \times (R_i^TR_jb_j),
\end{equation}
where $\omega_i^b$ is the angular velocity of body $i$ relative to the inertial frame $\mathcal{F}_w$ resolved in the body frame $\mathcal{F}_{b_i}$. Note that when we consider the control in $\mathcal{F}_{b_i}$, the kinematics \eqref{eq:pre:DGamma} reads $\dot{\Gamma}_i=(R_i\widehat{\omega}_i^b) b_i$. If we plug in $\omega_i^b$ , we have
\begin{equation*}
  \dot{\Gamma}_i \!=\! R_i \widehat{b}_i \sum\limits_{j\in N_i} h(\theta_{ij}) b_i \times (R_i^TR_jb_j)\!=\! \widehat{\Gamma}_i \sum\limits_{j\in N_i}  h(\theta_{ij})\widehat{\Gamma}_i {\Gamma}_j.
\end{equation*}
which is exactly the closed-loop system in \eqref{eq:PF:ClosedSystem}. We can see in body frame $\mathcal{F}_{b_i}$ control \eqref{eq:PF:controlinbody} only contains relative information between the reduced attitudes and can be measured from a local frame.
}

By Rodrigues' rotation formula, the following lemma shows that the closed-loop system (\ref{eq:PF:ClosedSystem}) is invariant under any rotations.
\begin{lemma}\label{Thm:PF:rotationalInvariant}
For any rotation transformation about a unit axis $u \in \mathcal{S}^2$ through an angle $\theta \in [0,\pi]$, the system  (\ref{eq:PF:ClosedSystem}) is invariant, i.e. if
$\Pi_i = \exp(\theta \widehat{u})\Gamma_i, \; i \in \mathcal{V}$
then the closed-loop system in terms of $ \Pi_i$ is
$$ \dot{\Pi}_i=\widehat{\Pi}_i \sum_{j\in \mathcal{N}_i} h({\theta}_{ij})\widehat{\Pi}_i\Pi_j, $$
for all $i\in \mathcal{V}$.
\end{lemma}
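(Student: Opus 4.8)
The plan is to exploit that $R:=\exp(\theta\widehat u)$ is a \emph{constant} element of $\mathcal{SO}(3)$, so that differentiating the substitution $\Pi_i=R\Gamma_i$ in time yields simply $\dot\Pi_i=R\dot\Gamma_i$, and then to commute the constant matrix $R$ through the right-hand side of \eqref{eq:PF:ClosedSystem} using two structural facts about the rotation group. The whole statement is really an assertion that the closed-loop vector field is $\mathcal{SO}(3)$-equivariant, so the task is to make that equivariance explicit.

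First I would observe that rotations are isometries: since $R^TR=\mathbf I_3$, we have $\Pi_i^T\Pi_j=\Gamma_i^T\Gamma_j$, so the pairwise geodesic distances are preserved and $\arccos(\Pi_i^T\Pi_j)=\theta_{ij}$. This is what legitimizes reusing the \emph{same} gains $h(\theta_{ij})$ in the transformed dynamics, with no relabeling of the weights; it is the reason the claimed $\Pi$-system can carry the identical scalar coefficients.

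The crucial algebraic ingredient is the equivariance of the hat map under the adjoint action of $\mathcal{SO}(3)$, namely $\widehat{Rv}=R\,\widehat v\,R^T$ for all $v\in\mathbb R^3$; this is equivalent to the covariance of the cross product, $R(a\times b)=(Ra)\times(Rb)$, and follows from Rodrigues' formula together with the orientation-preserving property of $R$. Granting it, one has $\widehat{\Pi}_i=R\widehat{\Gamma}_iR^T$ and $\Pi_j=R\Gamma_j$, so each nested term collapses as $\widehat{\Pi}_i\Pi_j=R\widehat{\Gamma}_i\Gamma_j$; substituting into $\widehat{\Pi}_i\sum_{j}h(\theta_{ij})\widehat{\Pi}_i\Pi_j$ and repeatedly using $R^TR=\mathbf I_3$ to cancel the internal factors leaves $R$ multiplying the entire right-hand side of \eqref{eq:PF:ClosedSystem}, which is exactly $R\dot\Gamma_i=\dot\Pi_i$. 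This closes the argument.

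There is no deep obstacle here. The only step demanding care is the equivariance identity $\widehat{Rv}=R\widehat vR^T$ — keeping the transposes in their correct positions and recognizing that it is precisely this identity, rather than mere linearity of the hat map, that permits $R$ to be moved past the nested skew-symmetric operators. Everything else is bookkeeping with $R^TR=\mathbf I_3$ and the time-independence of $R$.
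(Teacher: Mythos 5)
Your proof is correct and follows essentially the same route the paper intends: the paper gives no detailed argument, merely attributing the lemma to Rodrigues' rotation formula, and your explicit use of the adjoint identity $\widehat{Rv}=R\widehat{v}R^T$ together with $R^TR=\mathbf{I}_3$ and the invariance of $\theta_{ij}$ is exactly the standard way to make that equivariance precise. No gaps.
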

\rev{
In what follows, we denote $\mathbf{\Gamma}=(\Gamma_1^T, \dots, \Gamma_{N_0}^T)^T$, then the formation of regular polyhedron $\{p,q\}$ in $\mathcal{S}^2$ is defined by
\[
 	\mathcal{M}_{\{p,q\}}\! =\! \left\{\mathbf{\Gamma} \!\in\! (\mathcal{S}^2)^{N_0} \!:\! \mathbf{\Gamma}\!=\!(\mathbf{I}_{N_0} \!\otimes R) \mathbf{\Gamma}^{\{p,q\}}, \forall R \!\in\! \mathcal{SO}(3)\right\},
\]
where $\otimes$ is the Kronecker product and  $\mathbf{\Gamma}^{\{p,q\}} \in (\mathcal{S}^2)^{N_0}$ is a given state defining a formation of regular polyhedron $\{p,q\}$.
Although there is no exact expression of $\mathbf{\Gamma}^{\{p,q\}}$ given here for each $\{p,q\}$, in the next section based on the symmetries of regular polyhedra, another expression of $\mathcal{M}_{\{p,q\}}$ is provided, by which we are able to handle all platonic solids within a unified framework.
}

\rev{Denote $\mathcal{G}_{\{p,q\}}$ as the inter-agent topology employed for formation $\{p,q\}$.} Now, we are ready to pose the intrinsic formation problem investigated in this paper.
\begin{problem}\label{prob:Int:Problem1}
   In closed-loop system (\ref{eq:PF:ClosedSystem}), for all integer $p$, $q$ satisfying inequality (\ref{eq:pre:relation_p_q}), find a proper inter-agent graph $\mathcal{G}_{\{p,q\}}$ with $N_0^{\{p,q\}}$ vertices, such that the regular polyhedra formation $\mathcal{M}_{\{p,q\}}$ is invariant and further asymptotically stable.
\end{problem}
Since in the intrinsic formation scheme, the control protocol only contains some simple interaction, for example a repulsion in \eqref{eq:PF:Control}, and the desired pattern is constructed based on the designed connection topology,
in the following section we give some design criteria for finding candidate
graphs that can solve Problem \ref{prob:Int:Problem1}.

\section{Design of Graph}\label{sec:graph}
In this section, the inter-agent topology is designed based on the
symmetry properties possessed by the Platonic solids. Under a symmetry assumption on the connection, we give a family of possible graphs that can make the desired formations invariant in the closed-loop system.
\subsection{Symmetries of Platonic Solids}\label{sec:symmetryPlatonicSolid}
First, we give a coordinate-based description for the symmetries of regular polyhedra.

For a regular polyhedron $\{p,q\}$, each rotational symmetry can be identified by a pair $(R,\sigma)$, in which $R$ is a rotation about some axis passing the center of $\{p,q\}$, and $\sigma$ is a permutation among vertices acting equivalently as rotation $R$. We denote $\mathcal{H}_{\{p,q\}}=\big\{(R_i,\sigma_i)\big\}_{i \in\mathcal{V}}$ as a subset of all rotational symmetries, where the rotation map $R_i: (\mathcal{S}^2)^{N_0}\to \mathcal{SO}(3)$ defined by $R_i(\mathbf{\Gamma})=\exp(\frac{2\pi}{q}\widehat{\Gamma}_i)$, and $\sigma_i$ is the permutation acting equivalently with $R_i$ when $\mathbf{\Gamma}=\mathbf{\Gamma}^{\{p,q\}}$.

Therefore, we obtain a description of vertices set for the regular polyhedron $\{p,q\}$ as
\begin{align}\label{eq:Int:equilibriaSet2}
   \mathcal{M}'_{\{p,q\}}\!\!&=\!\!\Big\{\mathbf{\Gamma}  \!\in \! (\mathcal{S}^2)^{N_0}  :  \; \exists m \neq n \in \mathcal{V}, \; s.t.  \; \widehat{\Gamma}_m \Gamma_n\neq 0 ; \nonumber \\
        &\!\!\!\!\big(\mathbf{I}_{N_0} \!\otimes \!R(\mathbf{\Gamma}) \!- \!P_{\sigma} \!\otimes\! \mathbf{I}_3 \big)\mathbf{\Gamma}\!=\!0,\, \forall (R,\!\sigma) \!\!\in\! \mathcal{H}_{\{p,q\}}\!\!\Big\},
\end{align}
where the condition $\widehat{\Gamma}_m \Gamma_n\neq 0$ is to eliminate the consensus and antipodal configurations of all vertices.
Illustratively, the permutations in $\mathcal{H}_{\{3,3\}}$, for example, are $\sigma_1=(1)(2,3,4)$, $\sigma_2=(2)(1,4,3)$, $\sigma_3=(3)(1,2,4)$, $\sigma_4=(4)(2,1,3)$, where the cycle notation is used.

\rev{
Actually it can be shown that two representations of polyhedral formations are identical, i.e., $\mathcal{M}'_{\{p,q\}}=\mathcal{M}_{\{p,q\}}$.
}
\rev{
\begin{prop}\label{thm:Int:twoMEquivalent}
For all integer $p$, $q$ satisfying inequality (\ref{eq:pre:relation_p_q}), $\mathcal{M}'_{\{p,q\}}=\mathcal{M}_{\{p,q\}}$.
\end{prop}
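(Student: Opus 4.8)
The plan is to prove the two inclusions $\mathcal{M}_{\{p,q\}}\subseteq\mathcal{M}'_{\{p,q\}}$ and $\mathcal{M}'_{\{p,q\}}\subseteq\mathcal{M}_{\{p,q\}}$ separately, the first being routine and the second carrying essentially all of the work. Throughout I would rely on the equivariance of the hat map, $\widehat{Qv}=\det(Q)\,Q\widehat{v}Q^{T}$ for every $Q\in O(3)$ and $v\in\mathbb{R}^3$, which upon exponentiation gives $\exp(\theta\,\widehat{Qv})=Q\exp(\det(Q)\,\theta\,\widehat{v})Q^{T}$.

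For $\mathcal{M}_{\{p,q\}}\subseteq\mathcal{M}'_{\{p,q\}}$: by construction the reference state $\mathbf{\Gamma}^{\{p,q\}}$ satisfies every relation in \eqref{eq:Int:equilibriaSet2}, since $\sigma_i$ is defined precisely so that $\exp(\tfrac{2\pi}{q}\widehat{\Gamma}^{\{p,q\}}_i)\Gamma^{\{p,q\}}_k=\Gamma^{\{p,q\}}_{\sigma_i(k)}$ for all $k$. Given any $\mathbf{\Gamma}=(\mathbf{I}_{N_0}\otimes R)\mathbf{\Gamma}^{\{p,q\}}$ with $R\in\mathcal{SO}(3)$, I would substitute $\Gamma_i=R\Gamma^{\{p,q\}}_i$ into the symmetry relation and push $R$ through the exponential via the equivariance identity (with $\det R=1$); the reference relation then yields $\exp(\tfrac{2\pi}{q}\widehat{\Gamma}_i)\Gamma_k=R\Gamma^{\{p,q\}}_{\sigma_i(k)}=\Gamma_{\sigma_i(k)}$, which is exactly \eqref{eq:Int:equilibriaSet2}. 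Non-degeneracy is preserved because $\widehat{\Gamma}_m\Gamma_n=R(\widehat{\Gamma}^{\{p,q\}}_m\Gamma^{\{p,q\}}_n)$ and $R$ is invertible.

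For the reverse inclusion I would first extract the pairwise geometry of an arbitrary $\mathbf{\Gamma}\in\mathcal{M}'_{\{p,q\}}$ from the constraints. Reading \eqref{eq:Int:equilibriaSet2} componentwise gives $\exp(\tfrac{2\pi}{q}\widehat{\Gamma}_i)\Gamma_k=\Gamma_{\sigma_i(k)}$ for all $i,k$; since $\sigma_i$ fixes $i$, this identifies $\Gamma_i$ as the rotation axis, and the cycle structure of $\sigma_i$ (the $q$ neighbours of $i$ forming a single $q$-cycle) forces those neighbours to share a common geodesic distance to $\Gamma_i$ and to be equally spaced by $2\pi/q$ in azimuth about $\Gamma_i$. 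Applying Lemma~\ref{Thm:PR:cosine} to these equidistant, equally spaced triples then propagates the angular data across the whole vertex set, linking the inner products coming from different axes. The aim is to show that the Gram matrix $G=[\Gamma_j^{T}\Gamma_k]$ is forced to coincide with the Gram matrix $G^{\{p,q\}}$ of the reference solid: the symmetry relations reduce $G$ to a dependence on a single scalar unknown (the common edge angle), the non-degeneracy condition excludes the consensus and antipodal (rank-deficient) solutions, and the requirement that $G$ have rank at most $3$ together with positive semidefiniteness pins the unknown to the unique regular-polyhedron value.

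Once $G=G^{\{p,q\}}$, two labelled families of unit vectors spanning $\mathbb{R}^3$ with identical Gram matrices differ by a unique $Q\in O(3)$, so $\Gamma_j=Q\Gamma^{\{p,q\}}_j$ for all $j$, and it remains only to show $Q\in\mathcal{SO}(3)$. Here I would exploit the orientation-sensitivity of the constraints: if $\det Q=-1$, the equivariance identity sends $\exp(\tfrac{2\pi}{q}\widehat{\Gamma}_i)\Gamma_k$ to $Q\exp(-\tfrac{2\pi}{q}\widehat{\Gamma}^{\{p,q\}}_i)\Gamma^{\{p,q\}}_k=\Gamma_{\sigma_i^{-1}(k)}$, so membership in $\mathcal{M}'_{\{p,q\}}$ would force $\sigma_i=\sigma_i^{-1}$, i.e. $\sigma_i^2=\mathrm{id}$, contradicting that $\sigma_i$ has order $q\ge 3$. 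Hence $Q$ is a rotation and $\mathbf{\Gamma}\in\mathcal{M}_{\{p,q\}}$. The main obstacle is the Gram-matrix step: carrying the angular propagation out \emph{uniformly} for all five solids, in particular the icosahedron and dodecahedron where several distinct geodesic distances occur, rather than case by case. The cleanest route I foresee is to argue locally first (each vertex forces a regular $q$-gon of neighbours at a common colatitude), then use Lemma~\ref{Thm:PR:cosine} repeatedly to express every remaining inner product as a function of the single edge angle, and finally close with the rank and positive-semidefiniteness constraints; making this propagation genuinely uniform, and verifying that the reference solid is the unique admissible solution, is where the care is needed.
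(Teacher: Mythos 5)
Your argument takes a genuinely different route from the paper's, and the comparison is worth making explicit. For the hard inclusion $\mathcal{M}'_{\{p,q\}}\subseteq\mathcal{M}_{\{p,q\}}$ the paper argues combinatorially: it uses the non-degeneracy condition together with the cycle structure of the $\sigma_i$ to show all $\Gamma^*_i$ are mutually distinct, then observes that the relation $\Gamma^*_{\sigma_i(k)}=R_i(\mathbf{\Gamma}^*)\Gamma^*_k$ forces all segments from $i$ to the vertices in its non-singleton cycle to have equal length, and concludes that $\mathbf{\Gamma}^*$ is a polyhedron with all edges equal, hence lies in $\mathcal{M}_{\{p,q\}}$. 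You instead go through the Gram matrix: a regular $q$-gon of neighbours at each vertex, propagation of angles via Lemma~\ref{Thm:PR:cosine}, pinning of the edge angle by rank and positive semidefiniteness, $O(3)$-rigidity of labelled configurations with equal Gram matrices, and a final chirality argument ($\det Q=-1$ would force $\sigma_i^2=\mathrm{id}$, impossible since $\sigma_i$ contains a $q$-cycle with $q\ge 3$). Your route is more careful on two points the paper glosses over: equal edge lengths plus distinct vertices do not by themselves characterize the regular polyhedron, and the paper never rules out an orientation-reversing image of $\mathbf{\Gamma}^{\{p,q\}}$, which would \emph{not} belong to $\mathcal{M}_{\{p,q\}}$ as defined with $R\in\mathcal{SO}(3)$; your last step addresses exactly this.

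The genuine gap is the one you flag yourself: the claim that the symmetry relations reduce the Gram matrix to a single scalar unknown which is then pinned by rank $\le 3$ and positive semidefiniteness is asserted, not executed. For $\{3,3\}$ it is a one-line computation (the cosine rule gives $\cos\theta=\tfrac{3}{2}\cos^2\theta-\tfrac12$, and excluding $\cos\theta=1$ leaves $-\tfrac13$), but for $\{3,5\}$ and $\{5,3\}$ each permutation $\sigma_i$ has several non-singleton cycles and several distinct inter-vertex distances appear, so the propagation and the uniqueness of the admissible solution must actually be carried out, uniformly in $p,q$ or case by case. Until that is done the proposal is a program rather than a proof. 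In fairness, the paper's own proof makes an at least comparable jump at the corresponding point (``a polyhedron with identical length of all edges'' is taken to be a member of $\mathcal{M}_{\{p,q\}}$ without further argument), so your attempt is not weaker than the published one --- but neither settles the decisive step in full.
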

}
\begin{proof}
\rev{See the proof in Appendix~\ref{sec:AppendixPfM}.}
\end{proof}

\rev{
Due to Proposition~\ref{thm:Int:twoMEquivalent},  in the rest of the paper we omit the prime and use $\mathcal{M}_{\{p,q\}}$ presenting the regular polyhedron formation $\{p, q\}$ defined in \eqref{eq:Int:equilibriaSet2}.
}

\subsection{Symmetries of Inter-agent Topology}
\rev{
Driven by the simple antagonistic interaction
(\ref{eq:PF:ClosedSystem}), the construction of desired formations
depends heavily on the inter-agent graph employed. Since the five
Platonic solids possess the most symmetries in all polyhedra,
intuitively some symmetries should also be inherited by the designed graph.
}

In order to characterize symmetries of a graph, we give the definition of graph automorphism.
\begin{definition}
  For a graph $\mathcal{G}=(\mathcal{V},\mathcal{E})$, a permutation
  specified by mapping $\sigma: \mathcal{V} \rightarrow \mathcal{V}$
  is a graph automorphism, if the edge set satisfies $(\sigma(i),\sigma(j))\in
  \mathcal{E}$ if and only if $(i,j)\in \mathcal{E}$.
\end{definition}
For a graph automorphism, the following remark gives an alternative characterization that is easy to check.
\begin{remark}\label{rem:GD:PACommute}
  Let $A$ be the adjacency matrix of graph $\mathcal{G}$, then a permutation $\sigma$ is an automorphism of $\mathcal{G}$, if and only if $AP_\sigma=P_\sigma A$, where  $P_\sigma$ is the permutation matrix of $\sigma$.
\end{remark}
We use the next assumption to indicate that the inter-agent topology
designed
will share the same symmetries with the corresponding polyhedron.
\begin{assumption}[graph symmetry] \label{ass:GD:graphSymmetry}
  The inter-agent graph $\mathcal{G}_{\{p,q\}}$ is undirected, connected, and each permutation in $\mathcal{H}_{\{p,q\}}$ is an automorphism of this graph.
\end{assumption}
In the following, it will be shown that under a graph satisfying Assumption~\ref{ass:GD:graphSymmetry} the manifold consisting of the desired polyhedra formations is invariant. To this end, we start with the next lemma.
\begin{lemma}\label{thm:GD:symmetricDynamics}
 Let a vector field be $f:\mathbb{R}^n \times \mathbb{R}^+ \rightarrow \mathbb{R}^n$. Then the collection of invertible linear transformations,
 $$\Pi\!=\!\big\{A \!\in\! GL(n, \mathbb{R})\!:\! f(Ax,t)\!=\!Af(x,t), \forall \! x,t\big\}$$
 constructs a group under matrix multiplication. Moreover, for any $A \in \Pi $, the set $M_A=\{x \in \mathbb{R}^n: A x=x\}$ is invariant under dynamics $\dot{x}=f(x,t)$.
 \end{lemma}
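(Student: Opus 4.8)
The plan is to treat the two assertions separately: first that $\Pi$ forms a group under matrix multiplication, then that each fixed-point set $M_A$ is invariant under the flow.

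For the group structure I would verify the three defining properties, associativity being inherited from ordinary matrix multiplication. The identity $\mathbf{I}_n$ clearly lies in $\Pi$, since $f(\mathbf{I}_n x,t)=f(x,t)=\mathbf{I}_n f(x,t)$. For closure, given $A,B\in\Pi$ I would chain the two symmetry relations: $f(ABx,t)=Af(Bx,t)=ABf(x,t)$, where the first equality applies the defining property of $A$ at the point $Bx$ and the second applies that of $B$; hence $AB\in\Pi$. For inverses, starting from $f(Ax,t)=Af(x,t)$ and substituting $x\mapsto A^{-1}x$ gives $f(x,t)=Af(A^{-1}x,t)$; left-multiplying by $A^{-1}$ then yields $f(A^{-1}x,t)=A^{-1}f(x,t)$, so $A^{-1}\in\Pi$. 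This part is entirely routine.

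For the invariance claim, the key observation is that the symmetry of $f$ forces the linear map $A$ to carry solutions to solutions. Fix $A\in\Pi$ and let $x(\cdot)$ solve $\dot x=f(x,t)$ with $x(0)\in M_A$, i.e. $Ax(0)=x(0)$. Setting $z(t)=Ax(t)$ and differentiating, the defining property of $\Pi$ gives $\dot z=A\dot x=Af(x,t)=f(Ax,t)=f(z,t)$, so $z(\cdot)$ is again a trajectory of the same system. Since $z(0)=Ax(0)=x(0)$, the curves $x(\cdot)$ and $z(\cdot)$ share the same initial datum and obey the same differential equation; by uniqueness of solutions they coincide, whence $Ax(t)=z(t)=x(t)$ for all $t$, that is $x(t)\in M_A$. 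This establishes invariance of $M_A$. (As an aside, $M_A=\ker(A-\mathbf{I}_n)$ is a linear subspace, the eigenspace of $A$ for eigenvalue $1$, which is why this construction later produces the formation manifolds.)

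The main obstacle is the reliance on uniqueness of solutions in the last step: without it the twin-trajectory argument only shows that $z$ is \emph{some} solution, not that it equals $x$. This is harmless in the present setting, because the relevant vector field is the closed-loop field of \eqref{eq:PF:ClosedSystem}, whose right-hand side is locally Lipschitz, inherited from the Lipschitz assumption on $\bar h=h\circ\arccos$, so the standard existence–uniqueness theorem applies. I would therefore either invoke local Lipschitz continuity of $f$ explicitly in the proof or qualify the lemma to vector fields admitting unique solutions.
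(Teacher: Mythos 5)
Your proof is correct. The group-axioms part is essentially identical to the paper's (closure by chaining the two equivariance relations, identity trivially, inverse by the substitution $x\mapsto A^{-1}x$). The invariance part, however, takes a genuinely different route. The paper argues infinitesimally: for $x_0\in M_A$ it computes $f(x_0,t)=f(Ax_0,t)=Af(x_0,t)$, so $f(x_0,t)$ is itself a fixed point of $A$, i.e.\ $f(x_0,t)\in M_A$; since $M_A=\ker(A-\mathbf{I}_n)$ is a linear subspace, $T_{x_0}M_A=M_A$, so the vector field is everywhere tangent to $M_A$ and invariance follows. You instead argue at the level of trajectories: $A$ maps solutions to solutions, and a solution starting at a fixed point of $A$ coincides with its image under $A$ by uniqueness, hence never leaves $M_A$. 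Your version has the virtue of making explicit the reliance on uniqueness of solutions --- a hypothesis the paper's tangency argument also needs for (strong) invariance of a closed set under a merely continuous field, but leaves unstated; and your caveat is indeed harmless here, since the closed-loop field of \eqref{eq:PF:ClosedSystem} is locally Lipschitz (as the paper itself verifies at the start of the proof of Theorem~\ref{thm:GD:invarianceOfFormation}). The paper's version buys a purely pointwise argument that never mentions solution curves, at the cost of sweeping the same regularity issue under the tangency-implies-invariance step. Either way the lemma as applied is sound.
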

\begin{proof}
Firstly, we prove $\Pi$ forms a group under the matrix multiplication. \textbf{Closure}: Let $A, B \in \mathbb{R}^{n \times n}$ such that $f(Ax,t)=Af(x,t)$, and $f(Bx,t)=Bf(x,t)$, then $f(ABx,t)=ABf(x,t)$. \textbf{Associativity}: Follows from the associativity of matrix multiplication. \textbf{Identity }: $\mathbf{I}_n \in \Sigma$ as $f(\mathbf{I}_nx,t)=\mathbf{I}_nf(x,t)$. \textbf{Inverse }:  For any $A \in \Pi$ and $x \in \mathbb{R}^n$, let $y=A^{-1}x$, then $f(x)=f(Ay)=Af(y)$. This implies $f(A^{-1}x)=f(y)=A^{-1}f(x)$. Thus $A^{-1} \in \Pi$. Furthermore, if $A \in \Pi$, for any $x_0 \in M_A$, we have $f(x_0)=f(A x_0)=A f(x_0)$. Thus $f(x_0) \in M_A$. Since the tangent space of $M_A$ at $x_0$ satisfies $ T_{x_0}M_A=M_A$, we obtain the invariance of set $M_A$ under dynamics $\dot{x}=f(x,t)$.
\end{proof}

Then with help of Lemma~\ref{thm:GD:symmetricDynamics} we have the following theorem.
\begin{theorem}\label{thm:GD:invarianceOfFormation}
  Under Assumption~\ref{ass:GD:graphSymmetry}, the regular polyhedra formation $\mathcal{M}_{\{p,q\}}$ is invariant in closed-loop system (\ref{eq:PF:ClosedSystem}).
\end{theorem}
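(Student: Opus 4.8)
The plan is to realise $\mathcal{M}_{\{p,q\}}$, around each of its points, as an intersection of fixed-point sets $M_{A}$ of invertible linear maps $A$ belonging to the symmetry group $\Pi$ of Lemma~\ref{thm:GD:symmetricDynamics}, and then to invoke that lemma. Write $f$ for the right-hand side of the closed-loop system (\ref{eq:PF:ClosedSystem}) and recall that $\Pi=\{A\in GL(3N_0,\mathbb{R}):f(A\mathbf{\Gamma})=Af(\mathbf{\Gamma})\}$ is a group. Two families of elements of $\Pi$ are available: by the rotational invariance of Lemma~\ref{Thm:PF:rotationalInvariant}, $\mathbf{I}_{N_0}\otimes R\in\Pi$ for every $R\in\mathcal{SO}(3)$; and for every automorphism $\sigma\in\mathcal{H}_{\{p,q\}}$ I claim $P_{\sigma}\otimes\mathbf{I}_3\in\Pi$.

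First I would verify this last claim by a direct substitution. The $i$-th block of $(P_\sigma\otimes\mathbf{I}_3)\mathbf{\Gamma}$ is $\Gamma_{\sigma(i)}$ and the corresponding pairwise angles are $\theta_{\sigma(i)\sigma(j)}$; since $\sigma$ is a graph automorphism, $j\mapsto\sigma(j)$ is a bijection from $\mathcal{N}_i$ onto $\mathcal{N}_{\sigma(i)}$ (this is exactly the content of Remark~\ref{rem:GD:PACommute} under Assumption~\ref{ass:GD:graphSymmetry}). Reindexing the neighbour sum by $j'=\sigma(j)$ then turns the $i$-th block of $f\big((P_\sigma\otimes\mathbf{I}_3)\mathbf{\Gamma}\big)$ into $\widehat{\Gamma}_{\sigma(i)}\sum_{j'\in\mathcal{N}_{\sigma(i)}}h(\theta_{\sigma(i)j'})\widehat{\Gamma}_{\sigma(i)}\Gamma_{j'}$, which is the $\sigma(i)$-th block of $f(\mathbf{\Gamma})$, i.e. the $i$-th block of $(P_\sigma\otimes\mathbf{I}_3)f(\mathbf{\Gamma})$; hence $P_\sigma\otimes\mathbf{I}_3\in\Pi$. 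Using the group structure of $\Pi$, the product $(P_{\sigma_i}\otimes\mathbf{I}_3)(\mathbf{I}_{N_0}\otimes R_i^{-1})=P_{\sigma_i}\otimes R_i^{-1}$ lies in $\Pi$ for every $(R_i,\sigma_i)\in\mathcal{H}_{\{p,q\}}$ and every rotation $R_i\in\mathcal{SO}(3)$; each such map is orthogonal, hence preserves the state space $(\mathcal{S}^2)^{N_0}$ so that Lemma~\ref{thm:GD:symmetricDynamics} applies on the constraint manifold.

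Next I would fix $\mathbf{\Gamma}^{*}\in\mathcal{M}_{\{p,q\}}=\mathcal{M}'_{\{p,q\}}$ (using Proposition~\ref{thm:Int:twoMEquivalent}) and freeze the rotations there, $R_i^{*}:=\exp\!\big(\tfrac{2\pi}{q}\widehat{\Gamma}^{*}_i\big)$. The defining relations of $\mathcal{M}'_{\{p,q\}}$ in (\ref{eq:Int:equilibriaSet2}) say precisely $(\mathbf{I}_{N_0}\otimes R_i^{*})\mathbf{\Gamma}^{*}=(P_{\sigma_i}\otimes\mathbf{I}_3)\mathbf{\Gamma}^{*}$; multiplying by $\mathbf{I}_{N_0}\otimes (R_i^{*})^{-1}$ gives $A_i\mathbf{\Gamma}^{*}=\mathbf{\Gamma}^{*}$ with $A_i:=P_{\sigma_i}\otimes (R_i^{*})^{-1}\in\Pi$, so $\mathbf{\Gamma}^{*}\in M_{A_i}$ for every $i\in\mathcal{V}$. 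Lemma~\ref{thm:GD:symmetricDynamics} then makes each $M_{A_i}$ invariant, so the solution $\mathbf{\Gamma}(t)$ with $\mathbf{\Gamma}(0)=\mathbf{\Gamma}^{*}$ obeys $A_i\mathbf{\Gamma}(t)=\mathbf{\Gamma}(t)$, equivalently $R_i^{*}\Gamma_k(t)=\Gamma_{\sigma_i(k)}(t)$ for all $i,k$ and all $t$.

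The final and most delicate step is to deduce $\mathbf{\Gamma}(t)\in\mathcal{M}_{\{p,q\}}$ from these relations; the obstacle is that the invariant sets $M_{A_i}$ carry the \emph{frozen} rotations $R_i^{*}$, whereas membership in $\mathcal{M}'_{\{p,q\}}$ is phrased through the \emph{state-dependent} rotations $R_i(\mathbf{\Gamma}(t))$. To reconcile them I would use that the axis of $R_i$ runs through vertex $i$, so $\sigma_i(i)=i$; taking $k=i$ gives $R_i^{*}\Gamma_i(t)=\Gamma_i(t)$, and because $q\in\{3,4,5\}$ the only unit vectors fixed by $R_i^{*}$ are $\pm\Gamma_i^{*}$, so continuity of $t\mapsto\Gamma_i(t)$ from $\Gamma_i(0)=\Gamma_i^{*}$ forces $\Gamma_i(t)\equiv\Gamma_i^{*}$. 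Then $R_i(\mathbf{\Gamma}(t))=R_i^{*}$, the frozen relations become the defining relations of $\mathcal{M}'_{\{p,q\}}$, and the non-degeneracy clause is inherited from $\mathbf{\Gamma}^{*}$; hence $\mathbf{\Gamma}(t)\in\mathcal{M}'_{\{p,q\}}=\mathcal{M}_{\{p,q\}}$, proving invariance. Equivalently, one may establish this for the single reference configuration and then propagate to the whole $\mathcal{SO}(3)$-orbit $\mathcal{M}_{\{p,q\}}$ using $\mathbf{I}_{N_0}\otimes R\in\Pi$ together with the $\mathcal{SO}(3)$-invariance of $\mathcal{M}_{\{p,q\}}$. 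I expect the automorphism reindexing in the second step and this frozen-versus-state-dependent reconciliation to be the only steps requiring genuine care.
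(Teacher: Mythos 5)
Your proposal is correct and follows essentially the same route as the paper: commutation of the vector field with $\mathbf{I}_{N_0}\otimes R$ (Lemma~\ref{Thm:PF:rotationalInvariant}) and with $P_\sigma\otimes\mathbf{I}_3$ (via the automorphism reindexing of the neighbour sums), then Lemma~\ref{thm:GD:symmetricDynamics} applied to the products to make the fixed-point sets invariant, and intersection with the non-degenerate configurations. Your final step reconciling the frozen rotations $R_i^{*}$ with the state-dependent $R_i(\mathbf{\Gamma}(t))$ is in fact handled more explicitly than in the paper's own proof (and shows, correctly, that each configuration in $\mathcal{M}_{\{p,q\}}$ is an equilibrium), while the paper instead disposes of the non-degeneracy clause by noting that the consensus/antipodal set $\Omega_0$ and hence its complement are invariant by uniqueness of solutions.
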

\begin{proof}
  The function $\widehat{\Gamma}_i^2\Gamma_j$ is continuously differentiable and hence Lipschitz. The control gain $h(\theta_{ij})=\bar{h}(\Gamma_i^T\Gamma_j)$ is also Lipschitz in $\mathbf{\Gamma}$.
  In addition, the configuration manifold $(\mathcal{S}^2)^{N_0}$ is compact. 
  As such, dynamics (\ref{eq:PF:ClosedSystem}), with initial
  condition $\mathbf{\Gamma}(0)=\mathbf{\Gamma}^\ast$, has a unique
  solution $\mathbf{\Gamma}(\mathbf{\Gamma}^\ast,t)$, for $t \in
  [0,\infty)$. It can be shown by straight forward computation that the set $\Omega_0=\{\mathbf{\Gamma} \in (\mathcal{S}^2)^{N_0}  :  \widehat{\Gamma}_m\Gamma_n =0, \forall m,n \in \mathcal{V}\}$ is invariant under system (\ref{eq:PF:ClosedSystem}). Due to the uniqueness of solution $\mathbf{\Gamma}(\mathbf{\Gamma}^\ast,t)$, its complementary set $\Omega_0^C$ is also invariant.

  We denote the closed-loop system as
  $\dot{\mathbf{\Gamma}}=F(\mathbf{\Gamma})$,
  where $F(\cdot)$ is the stacked form of (\ref{eq:PF:ClosedSystem}). Due to Lemma~\ref{Thm:PF:rotationalInvariant}, for any $R \in \mathcal{SO}(3)$ and $\mathbf{\Gamma} \in  (\mathcal{S}^2)^{N_0}$,  we have
  \begin{equation}\label{eq:GD:rotaionInvariance}
    F[(\mathbf{I}_{N_0}\otimes R) \mathbf{\Gamma}] = (\mathbf{I}_{N_0}\otimes R) F(\mathbf{\Gamma}).
  \end{equation}

  In addition, we denote $A_\mathcal{G}=\big[a_{ij}\big]_{i,j \in
    \mathcal{V}}$ as the adjacency matrix of the inter-agent graph $\mathcal{G}$. Then for any $(\overline{R}, \overline{\sigma}) \in \mathcal{H}_{\{p,q\}}$,  by the individual closed-loop dynamics (\ref{eq:PF:ClosedSystem}),
  \begin{eqnarray}
    F[(P_{\overline{\sigma}} \otimes \mathbf{I}_3) \mathbf{\Gamma}]=
    \begin{bmatrix}
      \sum\limits_{j \in \mathcal{V}}{a_{1j}f(\Gamma_{\overline{\sigma}(1)},\Gamma_{\overline{\sigma}(j)})}\\
      \vdots\\
      \sum\limits_{j \in \mathcal{V}}{a_{N_0j}f(\Gamma_{\overline{\sigma}(N_0)},\Gamma_{\overline{\sigma}(j)})}\\
    \end{bmatrix}, \label{eq:GD:permutationInvariance_1}
  \end{eqnarray}
  \vspace{-2em}
  \begin{eqnarray}
    (P_{\overline{\sigma}} \otimes \mathbf{I}_3)F(\mathbf{\Gamma})=
    \begin{bmatrix}
      \sum\limits_{j \in \mathcal{V}}{a_{\overline{\sigma}(1)j}f(\Gamma_{\overline{\sigma}(1)},\Gamma_{j})}\\
      \vdots\\
      \sum\limits_{j \in \mathcal{V}}{a_{\overline{\sigma}(N_0)j}f(\Gamma_{\overline{\sigma}(N_0)},\Gamma_{j})}\\
    \end{bmatrix},\label{eq:GD:permutationInvariance_2}
  \end{eqnarray}
   where
   $f(\Gamma_i,\Gamma_j)=h(\theta_{ij})\widehat{\Gamma}_i^2\Gamma_j$. Since
   graph $\mathcal{G}$ satisfies
   Assumption~\ref{ass:GD:graphSymmetry}, we have
   $a_{ij}=a_{\overline{\sigma}(i)\overline{\sigma}(j)}$ which implies
   that (\ref{eq:GD:permutationInvariance_1}) and (\ref{eq:GD:permutationInvariance_2}) are equal. Combining this fact with (\ref{eq:GD:rotaionInvariance}), Lemma~\ref{thm:GD:symmetricDynamics} gives that $T=(P_{\overline{\sigma}} \otimes \mathbf{I}_3)^{-1}(\mathbf{I}_{N_0}\otimes R)$ is also an invariant transformation, i.e., $F(T\mathbf{\Gamma})=TF(\mathbf{\Gamma})$. Moreover, set $\{\mathbf{\Gamma} \in  (\mathcal{S}^2)^{N_0}:T\mathbf{\Gamma}=\mathbf{\Gamma}\}$ is invariant in closed-loop system.

Therefore, for any $(R_i,\sigma_i) \in \mathcal{H}_{\{p,q\}}$, where $i\in \mathcal{V}=\big\{1,\cdots,N_0^{\{p,q\}}\big\}$, we have $\Omega_i=\big\{\mathbf{\Gamma} \in (\mathcal{S}^2)^{N_0}  : (\mathbf{I}_{N_0} \!\otimes \!R_i - P_{\sigma_i} \!\otimes\! \mathbf{I}_3 )\mathbf{\Gamma}\!=\!0\big\}$ is invariant. Since $\mathcal{M}_{\{p,q\}}$ admits the composition $\mathcal{M}_{\{p,q\}}=\Omega_0^C \cap \bigcap\limits_{i \in \mathcal{V}}\Omega_i$, the assertion follows.
\end{proof}

\rev{
\begin{remark}\label{rm:GD:noGlobalController}
From the proof of Theorem~\ref{thm:GD:invarianceOfFormation}, we can see that the consensus and antipodal configurations are always equilibria of the closed-loop system (\ref{eq:PF:ClosedSystem}). Thus the global asymptotic stability of any formations  for reduced attitude are not even possible. This is also implied by the fact that there is no continuously differentiable global stabilizer for the dynamical systems on $\mathcal{S}^2$, since $\mathcal{S}^2$ is a closed manifold without boundary \cite{Bhat00scl}.
\end{remark}
}

\subsection{Possible Inter-agent Topology}
For the formation $\{p,q\}$, by virtue of Remark~\ref{rem:GD:PACommute} 
all graphs fulfilling Assumption~\ref{ass:GD:graphSymmetry} can be specified.

We denote the complete graph with $N$ vertices by $\mathcal{K}_{N}$. And a \textit{Platonic graph}, denoted by ${\mathcal{P}}_{\{p,q\}}$, is referred to as an undirected graph admitting the skeleton of Platonic solid $\{p,q\}$ as its edges.
Note that $\mathcal{P}_{\{3,3\}}=\mathcal{K}_{4}$.
Since any permutation is an automorphism of a complete graph and each permutation in $\mathcal{H}_{\{p,q\}}$ must be an automorphism of Platonic graph $\mathcal{P}_{\{p,q\}}$, we have two trivial graphs
$\mathcal{K}_{N_0^{\{p,q\}}}$ and $\mathcal{P}_{\{p,q\}}$ satisfying Assumption~\ref{ass:GD:graphSymmetry}.
We also present all other possible graphs in Appendix~\ref{sec:AppendixGraph}.

In the next section, the method for investigating the exponential stability of $\mathcal{M}_{\{p,q\}}$ in the closed-loop system under these graphs will be discussed.
\rev{Due to Remark~\ref{rm:GD:noGlobalController}, although global stability of the desired formations is more desirable, but it is actually inaccessible and the best result regarding stability of systems in $\mathcal{S}^2$ is the so-called almost-global stability, which requires to  exactly characterize all equilibria for the nonlinear closed-loop system \cite{markdahl2017almost}. In \cite{zhang2017tetrahedron} all equilibrium configurations for the regular tetrahedron case have been investigated, but unfortunately it fails here since solving systems of nonlinear multivariable equations becomes intractable when $N_0$ is larger.
}

%

\section{Stability Analysis of Desired Formations}\label{sec:stability}

In this section, by a novel coordinates transformation in
$\mathcal{S}^2$, we show first that stability of the desired formation
is equivalent to stability of a constrained nonlinear system with a
higher dimension. Then, a method for investigating stability of
constrained systems is provided. Furthermore, to avoid the difficulty
in eliminating redundant constraints introduced, we show that it is sufficient to study a simplified system with much less constraints.

\subsection{Coordinates Transformation}

\rev{
We set a new coordinates system consisting of the relative attitudes between any two agents $i$, $j$ and the absolute attitude of the whole formation. For every $\mathbf{\Gamma}\in (\mathcal{S}^2)^{N_0}$, the coordinates transformation is denoted by $\xi=[\xi_s^T,\xi_c^T]^T=\Phi(\mathbf{\Gamma})$.
In coordinates $\xi$, the component $\xi_s$ represents the relative attitude, and is defined by
\begin{equation}\label{eq:Method:ComponentsXi_s}
\xi_s\!=\![\xi_{12},\xi_{13},\!\cdots\!, \xi_{1N_0},\xi_{23}, \!\cdots \!, \xi_{2N_0}, \!\cdots\!, \xi_{N_0-1,N_0}]^T\!,
\end{equation}
in which $\xi_{ij}=\Gamma_i^T\Gamma_j$. The absolute attitude component $\xi_c=[\phi_1,\psi_1,\gamma]^T$, where $\gamma=\mathrm{atan2}( \cos(\phi_2\!)\sin(\psi_2\!-\!\psi_1\!),\sin(\phi_1\!)\cos(\phi_2\!)\cos(\psi_2\!\!-\!\!\psi_1\!)\!-\!\cos(\phi_1\!)\sin(\phi_2\!))
$
and $(\phi_k, \psi_k)$ are RPY angles of $\Gamma_{k}$. By this definition, component $\xi_c$ specifies the attitude of $\Gamma_1, \Gamma_2$ or equivalently the whole formation relative to the inertial frame $\emph{O-XYZ}$. The details on the meaning of $\xi_c$ can be found in \cite{zhang2017tetrahedron}.
}

Then by the above transformation, the closed-loop dynamics (\ref{eq:PF:ClosedSystem}) becomes
\begin{subequations}\label{eq:SF:newCorDynf}
    \begin{equation}\label{eq:SF:newCorDynf1}
      \dot{\xi}_s= \hat{f}_s(\xi_s),
    \end{equation}
    \begin{equation}\label{eq:SF:newCorDynf2}
        \dot{\xi}_c= \hat{f}_c(\xi_c,\xi_s).
    \end{equation}
\end{subequations}
Due to Lemma~\ref{Thm:PR:cosine}, after some involved algebraic manipulation, the elements of $\hat{f}_s(\cdot)$ in (\ref{eq:SF:newCorDynf1}) can be derived as
\begin{equation*}
  \dot{\xi}_{ij}=\!\sum\limits_{k \in \mathcal{N}_j}h(\theta_{jk})\left(\xi_{ij}\xi_{jk}-\xi_{ik}\right) \! +\!\sum\limits_{k \in \mathcal{N}_i}h(\theta_{ik})\left(\xi_{ij}\xi_{ik}-\xi_{jk}\right).
\end{equation*}

With the help of the coordinates introduced, the closed-loop dynamics achieves a triangular form, namely, the dynamics of $\xi_s$ only depends on $\xi_s$ itself. Moreover, under the new coordinates, the stability of manifold $\mathcal{M}_{\{p,q\}}$ is equivalent to the stability of one corresponding equilibrium $\xi_s= \xi_s^{\{p,q\}}$ in subsystem  (\ref{eq:SF:newCorDynf1}). \rev{For example, for regular tetrahedron $\mathcal{M}_{\{3,3\}}$ the corresponding equilibrium is $\xi_s^{\{3,3\}}=[\frac{1}{3},\frac{1}{3},\frac{1}{3},\frac{1}{3},\frac{1}{3},\frac{1}{3}]^T$.}

We note that in the coordinates $\xi$, the number of variables is in total
$D_\xi = N_0(N_0-1)/2+3$,
which are far more than the degrees of freedom, $2N_0$, of $\mathbf{\Gamma}\in (\mathcal{S}^2)^{N_0}$. This is because in space $(\mathcal{S}^2)^{N_0}$ there exist inherent constraints for the elements of $\xi_s$. We state these constraints in the following lemma.

\begin{lemma}\label{Thm:Method:IdentityFor4P}
For any $4$-element set ${C}=\{i,j,k,l\} \subset \mathcal{V}$, if $\Gamma_i,\Gamma_j,\Gamma_k,\Gamma_l \in \mathcal{S}^2$, we have the following identity (based on Problem 70 in \cite{dorrie2013100Problem}):
\begin{gather}\label{eq:Method:Constraint}
   g_{C}(\xi_s)
  :=\det
    \begin{pmatrix}
      1 & \xi_{ij}& \xi_{ik}& \xi_{il} \\
       \xi_{ij}& 1 & \xi_{jk}& \xi_{jl} \\
       \xi_{ik}&  \xi_{jk}& 1 &\xi_{kl} \\
       \xi_{il}&  \xi_{jl} &\xi_{kl} & 1\\
    \end{pmatrix}
    =0,
  \end{gather}
 where $\det(\cdot)$ is the determinant of a matrix and $\xi_{ij}=\Gamma_i^T\Gamma_j$.
\end{lemma}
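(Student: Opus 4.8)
The plan is to recognize the matrix in \eqref{eq:Method:Constraint} as a Gram matrix and then simply count dimensions. Since each $\Gamma_i\in\mathcal{S}^2\subset\mathbb{R}^3$ is a unit vector, we have $\Gamma_i^T\Gamma_i=\|\Gamma_i\|^2=1$, so every diagonal entry equals $1$, while every off-diagonal entry $\xi_{ab}=\Gamma_a^T\Gamma_b$ is an inner product. Hence the displayed $4\times4$ matrix is precisely the Gram matrix $G=V^TV$, where $V=[\Gamma_i,\Gamma_j,\Gamma_k,\Gamma_l]\in\mathbb{R}^{3\times4}$ is the matrix whose columns are the four points in $C$.

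First I would invoke the standard fact that the rank of a Gram matrix equals the rank of its generating vectors, i.e. $\mathrm{rank}(V^TV)=\mathrm{rank}(V)$. Because $V$ has only three rows, $\mathrm{rank}(V)\le 3$, and therefore $\mathrm{rank}(G)\le 3<4$. A matrix of order $4$ with rank at most $3$ is singular, so $g_C(\xi_s)=\det(G)=0$, which is exactly the claimed identity. Note that this argument uses nothing about the four indices other than that the corresponding vectors live in $\mathbb{R}^3$, so the identity holds for every $4$-element set $C\subset\mathcal{V}$ simultaneously.

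There is essentially no obstacle here; the only point meriting a one-line justification is the rank identity $\mathrm{rank}(V^TV)=\mathrm{rank}(V)$, which follows from $\ker(V^TV)=\ker(V)$: if $V^TVx=0$ then $x^TV^TVx=\|Vx\|^2=0$, forcing $Vx=0$, while the reverse inclusion is immediate. I would therefore present this Gram-matrix argument rather than the elementary route suggested by the reference, which would expand the $4\times4$ determinant and repeatedly apply the spherical cosine formula of Lemma~\ref{Thm:PR:cosine} to reduce it to a relation among the pairwise geodesic angles; that computation is heavier and offers no real advantage. If desired, I would close with the remark that the same reasoning shows any $n{+}2$ unit vectors in $\mathbb{R}^{n+1}$ produce a singular Gram matrix of this form, which explains why the constraint \eqref{eq:Method:Constraint} is the natural one in $(\mathcal{S}^2)^{N_0}$.
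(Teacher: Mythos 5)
Your Gram-matrix argument is correct and complete: the displayed matrix is $V^TV$ for $V=[\Gamma_i,\Gamma_j,\Gamma_k,\Gamma_l]\in\mathbb{R}^{3\times 4}$, so its rank is at most $3$ and its determinant vanishes, and your justification of $\operatorname{rank}(V^TV)=\operatorname{rank}(V)$ via $\ker(V^TV)=\ker(V)$ closes the only gap. The paper itself gives no proof, only a citation to D\"orrie's Problem~70, and the classical treatment there reduces to the same underlying fact (linear dependence of four vectors in $\mathbb{R}^3$) after passing through the spherical cosine formula; your route is the cleaner and more general one, as your closing remark about $n+2$ unit vectors in $\mathbb{R}^{n+1}$ makes explicit.
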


We conclude the above discussion with the following remark.
\begin{remark}\label{Rmk:SF:maniToPoint1}
Stability of desired formation $\mathcal{M}_{\{p,q\}}$ in closed-loop system (\ref{eq:PF:ClosedSystem}) is equivalent to stability of equilibrium $\xi_s= \xi_s^{\{p,q\}}$ in subsystem (\ref{eq:SF:newCorDynf1}) under the constraints $g_{C}(\xi_s)=0$, for any 4-element set $C \subset \mathcal{V}$.
\end{remark}

In the rest of the paper, denote the manifold $\mathcal{M}_C$ as
\begin{equation}\label{eq:Method:constraintManifold}
  \mathcal{M}_C\!=\!\Big\{ \xi_s\!: g_{C}(\xi_s)\!=\!0,\; \forall\text{4-element set}\, C \!\subset \!\mathcal{V} \!\Big\}.
\end{equation}
\rev{
We note that for a polyhedron with $N_0$ vertices, the number of constraints in \eqref{eq:Method:constraintManifold} is $\binom{N_0}{4}$,  which becomes enormous when $N_0$ is large. But actually only $m_0$ of them is needed to cast out the variable redundancy in the new coordinate $\xi$, where
\begin{equation}\label{eq:SF:DegreeRuleOut}
m_0=\frac{(N_0-2)(N_0-3)}{2}.
\end{equation}
}

In the next section in order to investigate a system subject to algebraic constraints we appeal to the concept stability of a system restricted to a manifold.


\subsection{Stability of System Restricted to Manifold}
Firstly, we consider the case of a linear system, which is defined by
\begin{equation}\label{LIS}
  \dot{x}=A\rev{x},\;\;\; x\in \mathbb{R}^n.
\end{equation}
Then we give the definition of \rev{asymptotic} stability for system (\ref{LIS}) restricted to a subspace.
\begin{definition}\label{def:stableRestrictLinear}
  Let $F \in \mathbb{R}^{m \times n}$ be full row rank, and $\mathbf{V}=\big\{x \in \mathbb{R}^n: F\rev{x}=0\big\}$ be a $(n-m)$-dimensional subspace in $\mathbb{R}^n$. We say that system (\ref{LIS}) \emph{restricted to $\mathbf{V}$\, is \rev{asymptotically} stable}, if for any trajectory $x^*(t)$ satisfying  $x^*(t) \in \mathbf{V},  \forall t \in [0,+\infty)$,  $x^*(t) \rightarrow 0$ as $t \rightarrow \infty$.
\end{definition}
We note that this definition is well-posed in the sense that  at least $x^*(t)=0$ is a trajectory always located in $\mathbf{V}$.
\rev{
\begin{remark}
Actually stability in Definition~\ref{def:stableRestrictLinear} is weaker than the condition that trajectory $x^*(t) \to 0$, $\forall x^*(0)\in \mathbf V$, i.e., the dynamics corresponding to subsapce $\mathbf V$  is stable.
\end{remark}
}
Let $\mathbf{V}^*$ be the maximal A-invariant subspace \cite[Sec. 0.7]{wonham1974linear} in $\mathbf{V}$. Since any trajectory $x(t) \in \mathcal{V}, \forall t \geq 0$ has to evolve in $\mathbf{V}^*$, an equivalent condition is provided in the next proposition.
\begin{proposition}[Equivalent Condition]\label{Thm:iff_SRS}
  Let $F \in \mathbb{R}^{m \times n}$ be full row rank, and $\mathbf{V}=\big\{x \in \mathbb{R}^n: Fx=0\big\}$. The system (\ref{LIS}) restricted to $\mathbf{V}$ is \rev{asymptotically} stable, if and only if the dynamics corresponding to subspace $\mathbf{V}^*$ is \rev{asymptotically} stable, where $\mathbf{V}^*$ is the maximal A-invariant subspace in $\mathbf{V}$.
\end{proposition}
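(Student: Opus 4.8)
The plan is to reduce both stability notions to a single concrete characterization of which trajectories remain in $\mathbf{V}$, after which the two implications become immediate. The central object is the set $W=\big\{x_0 \in \mathbb{R}^n : e^{At}x_0 \in \mathbf{V}, \forall t \geq 0\big\}$ of initial states whose trajectories never leave $\mathbf{V}$. The crux of the whole argument is to show that $W$ coincides with $\mathbf{V}^*$, the maximal $A$-invariant subspace contained in $\mathbf{V}$; once this is in hand, everything else is a one-line consequence of the definitions.

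To establish $W=\mathbf{V}^*$, I would prove two inclusions. The inclusion $\mathbf{V}^*\subseteq W$ is immediate: $A$-invariance of $\mathbf{V}^*$ gives $e^{At}x_0 \in \mathbf{V}^* \subseteq \mathbf{V}$ for every $x_0 \in \mathbf{V}^*$. For the reverse inclusion I would argue that $W$ is itself an $A$-invariant subspace lying in $\mathbf{V}$, whence $W \subseteq \mathbf{V}^*$ by maximality. Linearity of the flow makes $W$ a subspace, and $W \subseteq \mathbf{V}$ follows by evaluating at $t=0$. For invariance, take $x_0 \in W$; since $t \mapsto e^{At}x_0$ is a differentiable curve lying entirely in the linear subspace $\mathbf{V}$, its derivative $\tfrac{d}{dt}e^{At}x_0 = e^{At}(Ax_0)$ also lies in $\mathbf{V}$ (a subspace contains the limits of its difference quotients), so $Ax_0 \in W$. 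An equivalent and perhaps more transparent route is to note that $Fe^{At}x_0=0$ for all $t\geq 0$ is, by analyticity in $t$, equivalent to $FA^k x_0 = 0$ for all $k\geq 0$, exhibiting $W=\bigcap_{k\geq 0}\ker(FA^k)$ as the unobservable subspace of $(A,F)$, which is the standard description of the maximal $A$-invariant subspace in $\ker F=\mathbf{V}$.

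With $W=\mathbf{V}^*$ established, both directions follow directly. For necessity, assume the system restricted to $\mathbf{V}$ is asymptotically stable; then for any $x_0 \in \mathbf{V}^*=W$ the trajectory $e^{At}x_0$ stays in $\mathbf{V}$ for all $t\geq 0$, hence tends to $0$ by hypothesis, which is exactly asymptotic stability of the dynamics on $\mathbf{V}^*$. For sufficiency, assume the dynamics on $\mathbf{V}^*$ is asymptotically stable and let $x^*(t)$ be any trajectory with $x^*(t)\in\mathbf{V}$ for all $t\geq 0$; then $x^*(0)\in W=\mathbf{V}^*$, so the whole trajectory remains in $\mathbf{V}^*$ by invariance and converges to $0$ by hypothesis, giving the required restricted stability.

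I expect the only genuine obstacle to be the careful justification that $W$ is $A$-invariant, i.e. the reverse inclusion $W\subseteq \mathbf{V}^*$; the tangent-space derivative argument (or equivalently the observability-matrix characterization) must be stated precisely, since the remainder of the proof is merely the composition of $W=\mathbf{V}^*$ with the two definitions of \emph{asymptotic} stability.
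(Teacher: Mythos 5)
Your proof is correct and follows essentially the same route as the paper, which offers no explicit proof but justifies the proposition with the single observation that any trajectory remaining in $\mathbf{V}$ for all $t\geq 0$ must evolve in $\mathbf{V}^*$. Your identification of the set $W$ of such initial states with $\mathbf{V}^*$ (equivalently, with the unobservable subspace $\bigcap_{k\geq 0}\ker(FA^k)$ of the pair $(A,F)$) is a rigorous elaboration of exactly that claim, and the two implications then follow as you describe.
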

Then in order to avoid the complexity to obtain the inverse of matrices we introduce an orthonormal coordinates transformation as
\begin{align}\label{3PartCoordinates}
  T&=\begin{bmatrix}  T_1\\ T_2 \\ T_3\end{bmatrix}  = \begin{bmatrix}  T_1^T \vrule & T_2^T \vrule & T_3^T\end{bmatrix}^T \nonumber \\
  &=\begin{bmatrix}  v_1 \! & \!  \cdots \!  & \! v_r \vrule \! & \! v_{r+1} \!&\!  \cdots \! &\! v_{n-m} \vrule \!&\! v_{n-m+1} \!&\!  \cdots \!&\! v_n\end{bmatrix}^T,
\end{align}
where $\left\{v_1, \cdots, v_n\right\}$ is an orthonormal basis of
$\mathbb{R}^n$, $\left\{v_1, \cdots, v_r\right\}$ is a basis of the maximal A-invariant subspace $\mathbf{V}^*$, and $\left\{v_{n-m+1}, \cdots,  v_n \right\}$ is a basis of $\mathbf{I\!m}(F^T)$. Then we have $T^{-1}=T^T$. Moreover, by the Gram-Schmidt process, there is an invertible matrix $O_3 \in \mathbb{R}^{m \times m}$ orthonormalizing the rows of $F$ as $T_3= O_3F$.

Let $x=\big[T_1^T,T_2^T,T_3^T\big] z$, where $z=[z_1^T,z_2^T,z_3^T]^T$ with a compatible partition. Since $\mathbf{V}^*$ is an A-invariant subspace, we have $T_2AT_1^T=0$ and $T_3AT_1^T=0$.  Specifically, the dynamics of $z$ reads
  \begin{align}\label{3PartTrans1}
    \begin{bmatrix} \dot{z}_1\\ \dot{z}_2 \\ \dot{z}_3 \end{bmatrix}
     = \begin{bmatrix} T_1 A T_1^T & T_1 A T_2^T & T_1 A T_3^T \\ 0 & T_2 A T_2^T & T_2 A T_3^T \\ 0 & T_3 A T_2^T & T_3 A T_3^T \end{bmatrix} \begin{bmatrix}
     z_1\\ z_2 \\ z_3 \end{bmatrix}.
  \end{align}

\begin{lemma}\label{thm:controlability}
   Let $F \in \mathbb{R}^{m \times n}$ be full row rank, and for system (\ref{LIS}) $\mathbf{V}^*$ be the maximal A-invariant subspace in $\mathbf{V}=\big\{x \in \mathbb{R}^n: Fx=0\big\}$, then under coordinates transformation (\ref{3PartCoordinates}), the pair $(T_3AT_2^T, T_2AT_2^T)$ is observable.
\end{lemma}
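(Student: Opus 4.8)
The plan is to prove observability by contradiction, leveraging the maximality of $\mathbf{V}^*$. I would first recall the geometric characterization of observability: the pair $(T_3AT_2^T, T_2AT_2^T)$ fails to be observable precisely when there exists a nonzero subspace $\mathcal{W}$ of the $z_2$-coordinate space that is invariant under $T_2AT_2^T$ and contained in $\ker(T_3AT_2^T)$, since this unobservable subspace is the largest $T_2AT_2^T$-invariant subspace inside $\ker(T_3AT_2^T)$. Hence it suffices to show that no such nonzero $\mathcal{W}$ can exist. Assume for contradiction that such a $\mathcal{W}\neq\{0\}$ does exist.

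The key step is to lift $\mathcal{W}$ back to $\mathbb{R}^n$ and use it to enlarge $\mathbf{V}^*$. From the construction in (\ref{3PartCoordinates}), in the $z=(z_1,z_2,z_3)$ coordinates one has $\mathbf{V}=\{z_3=0\}$ (because $T_3=O_3F$, so $T_3x=0$ iff $Fx=0$) and $\mathbf{V}^*=\{z_2=0,\,z_3=0\}$. I would then define the candidate subspace $\widetilde{\mathbf{V}}=\{x\in\mathbb{R}^n : z_2\in\mathcal{W},\ z_3=0\}$, which in the original coordinates equals $\mathbf{V}^*\oplus T_2^T\mathcal{W}$. Two of the three required properties are immediate: $\widetilde{\mathbf{V}}\subseteq\mathbf{V}$ since $z_3=0$ on $\widetilde{\mathbf{V}}$, and $\widetilde{\mathbf{V}}\supsetneq\mathbf{V}^*$ strictly because $\mathcal{W}\neq\{0\}$. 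The crux is $A$-invariance: taking $x\in\widetilde{\mathbf{V}}$ with coordinates $(z_1,z_2,0)$ and $z_2\in\mathcal{W}$, the block dynamics (\ref{3PartTrans1}) show that the transformed image of $Ax$ has $z_3$-component $T_3AT_2^T z_2=0$ (as $z_2\in\ker(T_3AT_2^T)$) and $z_2$-component $T_2AT_2^T z_2\in\mathcal{W}$ (as $\mathcal{W}$ is $T_2AT_2^T$-invariant), so $Ax\in\widetilde{\mathbf{V}}$. The two lower-left zero blocks $T_2AT_1^T=T_3AT_1^T=0$, which follow from the $A$-invariance of $\mathbf{V}^*=\mathbf{I\!m}(T_1^T)$, are exactly what guarantees that the unconstrained $z_1$-coordinate does not leak into the $z_2$- or $z_3$-equations. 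Thus $\widetilde{\mathbf{V}}$ is an $A$-invariant subspace strictly containing $\mathbf{V}^*$ yet still contained in $\mathbf{V}$, contradicting the maximality of $\mathbf{V}^*$. Consequently no nonzero $\mathcal{W}$ exists and $(T_3AT_2^T, T_2AT_2^T)$ is observable.

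I expect the main obstacle to be the bookkeeping of translating the abstract unobservable subspace $\mathcal{W}$, which lives only in the $z_2$-space, back into a genuine $A$-invariant subspace of $\mathbb{R}^n$ and confirming that it sits strictly between $\mathbf{V}^*$ and $\mathbf{V}$. Once the coordinate identifications $\mathbf{V}=\{z_3=0\}$ and $\mathbf{V}^*=\{z_2=z_3=0\}$ are pinned down, the invariance verification reduces to reading off the $(3,2)$ and $(2,2)$ blocks of (\ref{3PartTrans1}), and the contradiction with maximality is then immediate.
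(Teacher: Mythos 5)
Your argument is correct, and it is the static, subspace-lattice counterpart of the paper's trajectory-based argument. The paper also proceeds by contradiction, but from non-observability it extracts a single initial condition $z_2^*(0)\neq 0$ whose output $T_3AT_2^Tz_2^*(t)$ vanishes identically, builds a full trajectory $z^*(t)=[z_1^{T}(t),z_2^{*T}(t),0^T]^T$ of (\ref{3PartTrans1}) that consequently remains in $\mathbf{V}=\{x: z_3=0\}$ for all $t\geq 0$, and then contradicts the fact (invoked just before Proposition~\ref{Thm:iff_SRS}) that every trajectory confined to $\mathbf{V}$ must evolve in $\mathbf{V}^*$. You instead take the entire unobservable subspace $\mathcal{W}$ and lift it to $\widetilde{\mathbf{V}}=\mathbf{V}^*\oplus T_2^T\mathcal{W}$, verifying $A$-invariance directly from the block structure (the zero blocks $T_2AT_1^T=T_3AT_1^T=0$ together with $\mathcal{W}$ being $T_2AT_2^T$-invariant and contained in $\ker(T_3AT_2^T)$), which contradicts the maximality of $\mathbf{V}^*$ without integrating the dynamics at all. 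The two proofs encode the same mechanism---unobservability would produce something that stays in $\mathbf{V}$ yet escapes $\mathbf{V}^*$---but yours is arguably more self-contained: the paper's version leans on the (standard but unproved there) dynamic characterization of $\mathbf{V}^*$ as the set of initial states whose trajectories never leave $\mathbf{V}$, and it must separately arrange the $z_1$-dynamics so that $z^*(t)$ is a genuine solution of (\ref{3PartTrans1}); your version needs only the identification of the unobservable subspace as the largest $T_2AT_2^T$-invariant subspace inside $\ker(T_3AT_2^T)$, plus a one-line block computation, and the strict inclusion $\widetilde{\mathbf{V}}\supsetneq\mathbf{V}^*$ follows from the injectivity of $T_2^T$ on the $z_2$-space.
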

\begin{proof}
\rev{See the proof in Appendix~\ref{sec:AppendixPfControllability}.}

\end{proof}

In what follows, unless otherwise mentioned, we assume $F \in
\mathbb{R}^{m \times n}$ is a full row rank matrix. $F$ can be
partitioned into $[F_1,F_2]$, where $F_1 \in \mathbb{R}^{m \times
  (n-m)}$ and  $F_2 \in \mathbb{R}^{m \times m}$. Without loss of
generality, we suppose $F_2$ is invertible. Since $F$ is a full row
rank matrix, this assumption would always hold by some rearrangement
among the variables. The next theorem offers the possibility to extend Proposition~\ref{Thm:iff_SRS} to nonlinear systems.
\begin{theorem}\label{Thm:Method:restrictStableNSCond}
  Let $F \in \mathbb{R}^{m \times n}$, $\mathbf{V}=\big\{x \in \mathbb{R}^n: Fx=0\big\}$. We assume $F_2$ is invertible. Then system (\ref{LIS}) restricted to $\mathbf{V}$ is \rev{asymptotically} stable, if and only if there exists $P \in \mathbb{R}^{(n-m) \times n}$ such that
  \begin{enumerate}[label=(\alph*)]
    \item $\begin{bmatrix} P\\ F \end{bmatrix}$ is invertible.\\
    \item $PA \begin{bmatrix} I_{n-m}\\ -F_2^{-1}F_1\end{bmatrix} \left( P\begin{bmatrix} I_{n-m}\\ -F_2^{-1}F_1 \end{bmatrix} \right)^{-1}$  is stable.
  \end{enumerate}
\end{theorem}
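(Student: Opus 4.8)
The plan is to prove both implications by identifying the matrix in condition~(b) with the linear operator that $A$, projected back onto $\mathbf{V}$, induces on $\mathbf{V}$, and then to relate its spectrum to the invariant dynamics on $\mathbf{V}^*$. First I would record the geometric content of the data. Writing $G=\begin{bmatrix} I_{n-m}\\ -F_2^{-1}F_1\end{bmatrix}$, the columns of $G$ form a basis of $\mathbf{V}=\ker F$, since $FG=F_1-F_2F_2^{-1}F_1=0$ and $G$ has full column rank. Condition~(a) forces $PG$ to be invertible: if $PGv=0$ then $Gv\in\mathbf{V}\cap\ker P$, so $\begin{bmatrix}P\\ F\end{bmatrix}Gv=0$, and invertibility of $\begin{bmatrix}P\\ F\end{bmatrix}$ together with injectivity of $G$ gives $v=0$; hence the inverse in~(b) is well defined. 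Setting $Q_1:=G(PG)^{-1}$ one has $PQ_1=I_{n-m}$ and $\mathbf{I\!m}(Q_1)=\mathbf{V}$, the idempotent $\Pi_P:=Q_1P$ is the projection onto $\mathbf{V}$ along $\ker P$, and the matrix $M:=PAG(PG)^{-1}=PAQ_1$ in~(b) is precisely the matrix, in the basis $Q_1$, of the operator $L_P:=\Pi_PA|_{\mathbf{V}}:\mathbf{V}\to\mathbf{V}$. In particular $M$ and $L_P$ have the same spectrum.

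For sufficiency I would argue directly from Definition~\ref{def:stableRestrictLinear}. Let $x^*(t)\in\mathbf{V}$ for all $t\ge0$ be a trajectory of~(\ref{LIS}). Since $x^*\in\mathbf{V}$ implies $\Pi_Px^*=x^*$, the variable $w(t):=Px^*(t)$ obeys $\dot w=P\dot x^*=PAx^*=PA\Pi_Px^*=PAQ_1Px^*=Mw$, so $w(t)=e^{Mt}w(0)\to0$ because $M$ is stable by~(b); as $x^*=Q_1w$ with $Q_1$ injective, $x^*(t)\to0$. Thus~(\ref{LIS}) restricted to $\mathbf{V}$ is asymptotically stable.

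For necessity I would pass to the orthonormal coordinates~(\ref{3PartCoordinates}) and construct a suitable $P$. By Proposition~\ref{Thm:iff_SRS}, restricted stability is equivalent to $A_{11}:=T_1AT_1^T=A|_{\mathbf{V}^*}$ being Hurwitz. Abbreviating $A_{ij}:=T_iAT_j^T$, recall from~(\ref{3PartTrans1}) that $A_{21}=A_{31}=0$ and, from Lemma~\ref{thm:controlability}, that $(A_{32},A_{22})$ is observable. Putting $r=\dim\mathbf{V}^*$ and $s=n-m-r$, I would seek $P=\tilde PT$ with $\tilde P=\begin{bmatrix} I_r & 0 & 0\\ 0 & I_s & -K_2\end{bmatrix}$ for a matrix $K_2\in\mathbb{R}^{s\times m}$ to be chosen. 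Computing $L_P$ in the coordinates $(z_1,z_2)$ on $\mathbf{V}=\{z_3=0\}$ (the projection $\Pi_P$ reads $(\zeta_1,\zeta_2,\zeta_3)\mapsto(\zeta_1,\zeta_2-K_2\zeta_3,0)$) and using $A_{21}=A_{31}=0$ yields the block-triangular form $L_P=\begin{bmatrix} A_{11} & A_{12}\\ 0 & A_{22}-K_2A_{32}\end{bmatrix}$, whose eigenvalues are those of $A_{11}$ together with those of $A_{22}-K_2A_{32}$. Since $(A_{32},A_{22})$ is observable, the output-injection pole-placement theorem supplies $K_2$ making $A_{22}-K_2A_{32}$ Hurwitz; with $A_{11}$ Hurwitz this renders $M$ (equivalently $L_P$) stable, giving~(b). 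Condition~(a) then follows because $\begin{bmatrix}P\\ F\end{bmatrix}T^T=\begin{bmatrix} I_r & 0 & 0\\ 0 & I_s & -K_2\\ 0 & 0 & B\end{bmatrix}$ with $B=FF^TO_3^T$ invertible is block-upper-triangular with invertible diagonal blocks.

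I expect the main obstacle to lie in the necessity direction, specifically in recognising that choosing $P$ amounts to choosing a complement $\ker P$ of $\mathbf{V}$, that this freedom enters $L_P$ only through the correction $A_{22}-K_2A_{32}$ of the induced $\mathbf{V}/\mathbf{V}^*$ block (the $A_{11}$ block being fixed by $A$-invariance of $\mathbf{V}^*$), and that the observability of Lemma~\ref{thm:controlability} is exactly what guarantees these remaining $s$ eigenvalues can be driven into the open left half-plane. The verification of~(a) and the bookkeeping of the block dimensions $r,s,m$ are routine but must be carried out carefully.
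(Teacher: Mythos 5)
Your proof is correct and the necessity direction follows essentially the same route as the paper: pass to the orthonormal coordinates (\ref{3PartCoordinates}), invoke Proposition~\ref{Thm:iff_SRS} to get $T_1AT_1^T$ Hurwitz, and use the observability from Lemma~\ref{thm:controlability} to stabilize the remaining block by output injection, which yields the same $P$ (your $-K_2$ is the paper's $K$). Your sufficiency argument via the projection $\Pi_P=Q_1P$ correctly supplies the step the paper explicitly omits.
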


\begin{proof}
Necessity:
   By Lemma~\ref{thm:controlability}, there is a matrix $K \in \mathbb{R}^{m \times (n-m-r)}$ such that $T_2AT_2^T+ K T_3AT_2^T $ is stable. Let $Q=\begin{bmatrix}  I_r & & \\ & I_{(n-m-r)} & K \\ & & I_m \end{bmatrix}$.
  Then we introduce a new coordinates as $y=QTx$.
  The dynamics in the new coordinates can be obtained as
   \begin{align}\label{3PartTrans2}
    \begin{bmatrix} \dot{y}_1\\ \dot{y}_2 \\ \dot{y}_3 \end{bmatrix} &= \begin{pmat}[{.|}] T_1 A T_1^T & T_1 A T_2^T   & * \cr 0 & T_2 A T_2^T + K T_3 A T_2^T &* \cr\- 0 & T_3 A T_2^T & * \cr \end{pmat} \begin{bmatrix}   y_1\\ y_2 \\ y_3 \end{bmatrix} \nonumber \\
    &= \begin{pmat}[{|}] \widetilde{A}_1 &\widetilde{A}_2 \cr\- \widetilde{A}_3 & \widetilde{A}_4 \cr \end{pmat}
    \begin{bmatrix}
     y_1\\ y_2 \\ y_3 \end{bmatrix}.
   \end{align}
   Furthermore, due to the asymptotic stability of system (\ref{LIS}) restricted to $\mathbf{V}$, Proposition~\ref{Thm:iff_SRS} gives that $T_1AT_1^T$ is Hurwitz. Thus, $\widetilde{A}_1$ is a stable matrix.

   Let $P=\begin{bmatrix} T_1 \\ T_2+K O_3F \end{bmatrix}$, then we show that $P$ satisfies conditions (a), (b) in the theorem.  Denote $\widetilde{F}=O_3F=\big[O_3F_1, \; O_3F_2\big]$, and $P=\begin{bmatrix} P_1 & P_2\end{bmatrix}$ where $P_1 \in \mathbb{R}^{(n-m) \times (n-m)}$, and $P_2 \in \mathbb{R}^{(n-m) \times m}$.
   Since $O_3F_2$ is invertible, the dynamics turns to (\ref{3PartTrans3}) (located at the top of the next page).

\begin{figure*}[tp]
\normalsize
\begin{equation}\label{3PartTrans3}
    \begin{bmatrix} \dot{y}_1\\ \dot{y}_2 \\ \dot{y}_3 \end{bmatrix} \!=\! \begin{bmatrix} P_1 & P_2 \\ O_3F_1 & O_3F_2 \end{bmatrix} A \begin{bmatrix}  \big(P_1\!-\!P_2F_2^{-1}F_1\big)^{-1} & -\big(P_1\!-\!P_2F_2^{-1}F_1\big)^{-1}P_2F_2^{-1}O_3^{-1} \\ -F_2^{-1}F_1\big(P_1\!-\!P_2F_2^{-1}F_1\big)^{-1}  & F_2^{-1}O_3^{-1}\!+\!F_2^{-1}F_1\big(P_1\!-\!P_2F_2^{-1}F_1\big)^{-1}P_2F_2^{-1}O_3^{-1}\end{bmatrix}
    \begin{bmatrix} y_1 \\ y_2\\ y_3  \end{bmatrix}.
    \end{equation}
\hrulefill
\end{figure*}

    Comparing (\ref{3PartTrans3}) with (\ref{3PartTrans2}), we obtain  the matrix
    \begin{eqnarray*}
      \widetilde{A}_1
      =PA\begin{bmatrix} I_{n-m}\\ -F_2^{-1}F_1\end{bmatrix} \left( \begin{bmatrix} P_1 & P_2 \end{bmatrix} \begin{bmatrix} I_{n-m}\\ -F_2^{-1}F_1 \end{bmatrix} \right)^{-1}
    \end{eqnarray*}
    As $\widetilde{A}_1$ is Hurwitz, we have (b) hold. On the other hand, it is obvious that $\begin{bmatrix} P\\ F \end{bmatrix}$ is invertible.

    Sufficiency: we only need to show $y(t):=Px(t) \rightarrow 0$, as $t\rightarrow 0$, which is omitted here.
\end{proof}

Now we are ready to extend asymptotic stability of linear systems restricted to a subspace to nonlinear systems. Let $\mathcal{M}=\big\{x \in \mathbb{R}^n: G(x)=0\big\}$ be a manifold, where $G:\mathbb{R}^n \rightarrow \mathbb{R}^m $ and $\left.\frac{\partial G}{\partial x}\right|_{x=x_e}$ has full row rank. We consider exponential stability restricted to $\mathcal{M}$ for a nonlinear system
\begin{equation}\label{NLS}
  \dot{x}=f(x),\;\;\; x\in \mathbb{R}^n.
\end{equation}

\begin{definition}
  Suppose $x_e\in \mathcal{M}$ is an equilibrium of system (\ref{NLS}), and $\left.\frac{\partial G}{\partial x}\right|_{x=x_e}$ has full row rank. We say system (\ref{NLS}) restricted to $\mathcal{M}$ is exponentially stable at $x_e$, if there is a neighbourhood of $x_e$, denoted by $B(x_e)$, such that for any trajectory $x^*(t)$ of (\ref{NLS}) satisfying $x^*(0) \in B(x_e) \cap \mathcal{M}$ and $x^*(t) \in \mathcal{M}$, $\forall t\geq 0$, there are positive constants $\alpha, \beta$ such that $\|x^*(t)-x_e\| \leq \alpha e^{-\beta t}$, $\forall t  \geq 0$.
\end{definition}

\rev{ The above definition is well defined since we always have trivial solution $x^*(t)=x_e, \forall t \geq 0$.}
Based on Theorem~\ref{Thm:Method:restrictStableNSCond}, the next theorem provides a method to investigate the stability of a nonlinear system restricted to manifold $\mathcal{M}$.
\begin{theorem}\label{Thm:Method:NonlinearRestrict}
  The nonlinear system (\ref{NLS}) restricted to manifold $\mathcal{M}$ is exponentially stable at $x_e$, if the linearized system
    $\dot{\bar{x}}= \bar{A} \bar{x}= \frac{\partial f(x_e)}{\partial x} \bar{x}$
  restricted to subspace $\bar{\mathbf{V}}$ is stable, where $\bar{x}=x-x_e$ and $\mathbf{\bar{V}}=\big\{\bar{x} \in \mathbb{R}^n : \left.\frac{\partial G}{\partial x}\right|_{x=x_e} \bar{x} =0 \big\}$.
\end{theorem}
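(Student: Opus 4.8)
The plan is to collapse the constrained nonlinear problem onto an ordinary (unconstrained) system living on $\mathcal{M}$ and then apply the classical linearization method, using Theorem~\ref{Thm:Method:restrictStableNSCond} both to supply a good local coordinate and to recognize the reduced Jacobian as a Hurwitz matrix. Throughout write $\bar A = \frac{\partial f}{\partial x}(x_e)$ and $F = \left.\frac{\partial G}{\partial x}\right|_{x=x_e} = [F_1, F_2]$ with $F_2$ invertible, which is available after reordering variables exactly as in the paragraph preceding Theorem~\ref{Thm:Method:restrictStableNSCond}, and partition $x_e = (x_{1e}, x_{2e})$ conformably.

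First I would build a local chart on $\mathcal{M}$. Since $F$ has full row rank, the implicit function theorem applied to $G(x)=0$ produces a smooth $\eta$ so that, near $x_e$, the points of $\mathcal{M}$ are precisely $x = x_e + \Psi(u)$, where $\Psi(u) = (u,\; \eta(x_{1e}+u)-x_{2e})$, $\Psi(0)=0$, and $D\Psi(0) = N := \begin{bmatrix} I_{n-m} \\ -F_2^{-1}F_1 \end{bmatrix}$; the columns of $N$ span $\bar{\mathbf V} = \ker F$. Next I invoke Theorem~\ref{Thm:Method:restrictStableNSCond}: stability of the linearization restricted to $\bar{\mathbf V}$ yields $P \in \mathbb{R}^{(n-m)\times n}$ with $\begin{bmatrix} P \\ F \end{bmatrix}$ invertible and $\tilde A_1 := P\bar A N (PN)^{-1}$ Hurwitz. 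From invertibility of $\begin{bmatrix} P \\ F \end{bmatrix}$ together with $FN = F_1 - F_2 F_2^{-1}F_1 = 0$ one reads off that $PN$ is invertible, so the map $u \mapsto y := P\Psi(u)$ has invertible differential $PN$ at the origin and, by the inverse function theorem, $y$ is a genuine coordinate on $\mathcal{M}$ near $x_e$, with smooth inverse $u=u(y)$, $u(0)=0$, $Du(0)=(PN)^{-1}$.

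The key observation is then that, for any trajectory $x^*(t)$ that is assumed to remain on $\mathcal{M}$ and near $x_e$, setting $\bar x = x^*-x_e$ and $y = P\bar x$ gives a \emph{closed} reduced equation $\dot y = \tilde f(y) := P f(x_e + \Psi(u(y)))$, simply by substituting $\bar x = \Psi(u(y))$. One computes $\tilde f(0) = P f(x_e) = 0$ and $D\tilde f(0) = P \bar A\, D\Psi(0)\, Du(0) = P\bar A N (PN)^{-1} = \tilde A_1$, so $\tilde f(y) = \tilde A_1 y + \rho(y)$ with $\rho$ vanishing to first order. Since $\tilde A_1$ is Hurwitz, the standard linearization theorem provides a quadratic Lyapunov function whose sublevel sets are forward invariant in a neighborhood of $y=0$ and on which $\|y(t)\| \le \alpha' e^{-\beta t}$; this simultaneously keeps the trajectory inside the chart and gives exponential decay. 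Transporting back through $\bar x = \Psi(u(y))$, which is $C^1$ with $\Psi(u(0))=0$ and hence locally Lipschitz, yields $\|x^*(t)-x_e\| = \|\Psi(u(y(t)))\| \le \alpha e^{-\beta t}$, which is the asserted exponential stability restricted to $\mathcal{M}$.

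The main obstacle is the bookkeeping that makes this reduction legitimate even though $\mathcal{M}$ is not assumed invariant under $f$: the device is that a trajectory which is \emph{hypothesized} to stay on $\mathcal{M}$ automatically obeys $\dot y = \tilde f(y)$, so no invariance of $\mathcal{M}$ is needed — only the constraint-stability hypothesis, which enters precisely through the Hurwitz matrix $\tilde A_1$ delivered by Theorem~\ref{Thm:Method:restrictStableNSCond}. Verifying that the linear part of the reduced system is exactly that matrix is the one place where the choices of $N$ (spanning the tangent space $\bar{\mathbf V}$) and $P$ (from condition (a)) must be matched carefully; once that identity is in hand, the escape and decay estimates are the routine content supplied by the Lyapunov function.
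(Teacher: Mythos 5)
Your proposal is correct and follows essentially the same route as the paper: both invoke Theorem~\ref{Thm:Method:restrictStableNSCond} to obtain $P$ with $\begin{bmatrix} P\\ \bar F\end{bmatrix}$ invertible and $P\bar A N(PN)^{-1}$ Hurwitz, both use a local chart (yours via the implicit function theorem on $G=0$, the paper's via inverting $\bar x\mapsto (P\bar x, G(x))$ and setting the second block to zero, which is the same construction viewed from the other side) to reduce a trajectory assumed to stay on $\mathcal{M}$ to a closed equation in $\varphi_{\mathrm{I}}=P\bar x$, and both conclude by identifying its Jacobian at the origin with that Hurwitz matrix. No gaps.
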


\begin{proof}
  We denote $\bar{F}=\left.\frac{\partial G}{\partial
      x}\right|_{x=x_e}=[\bar{F}_1,\bar{F}_2]$, where $\bar{F}_2 \in
  \mathbb{R}^{m \times m}$. Without loss of generality, we suppose
  $\bar{F}_2$ is invertible, otherwise some rearrangement of the variables is needed. Since $\dot{\bar{x}}= \bar{A} \bar{x}$ restricted to $\bar{\mathbf{V}}$ is stable, by Theorem~\ref{Thm:Method:restrictStableNSCond}, there is a matrix $P \in \mathbb{R}^{(n-m)\times n}$ such that
  \begin{eqnarray}
    &&\begin{bmatrix} P\\ \bar{F} \end{bmatrix} \text{is invertible}. \label{eq:Method:stable4NLS_Con1} \\
    &&P\bar{A} \begin{bmatrix} I_{n-m}\\ -\bar{F}_2^{-1}\bar{F}_1\end{bmatrix} \left( P\begin{bmatrix} I_{n-m}\\ -\bar{F}_2^{-1}\bar{F}_1 \end{bmatrix} \right)^{-1} \text{is stable}.\label{eq:Method:stable4NLS_Con2}
  \end{eqnarray}
  Then we set a transformation
  \begin{equation}\label{eq:Method:stable4NLS_Proof1}
  \varphi=\begin{bmatrix} \varphi_\Romannum{1}\\ \varphi_\Romannum{2} \end{bmatrix}=\Psi(\bar{x})=\begin{bmatrix} P\bar{x}\\ G(\bar{x}+x_e) \end{bmatrix}.
  \end{equation}
  By (\ref{eq:Method:stable4NLS_Con1}), we have $\left.\frac{\partial
      \Psi}{\partial \bar{x}}\right|_{\bar{x}=0}$ is invertible, thus
  this transformation is a diffeomorphism in the neighborhood of the
  origin. Moreover, by the inverse function theorem, there exist an inverse mapping $\Psi^{-1}: \mathbb{R}^n \rightarrow \mathbb{R}^n$, whose derivative at $\varphi=0$  satisfying
  {\small
  \begin{eqnarray*}
    \left.\frac{\partial \Psi^{-1}(\varphi)}{\partial \varphi}\right|_{\varphi=0} &=& \left(\left.\frac{\partial \Psi(\bar{x})}{\partial \bar{x}}\right|_{\bar{x}=0} \right)^{-1}\\
    &=& \begin{bmatrix} \Lambda & -\Lambda P_2\bar{F}_2^{-1} \\ -\bar{F}_2^{-1}\bar{F}_1\Lambda   & \;\;\bar{F}_2^{-1}\!+\!\bar{F}_2^{-1}\bar{F}_1\Lambda P_2\bar{F}_2^{-1}\end{bmatrix},
  \end{eqnarray*}}
  where $\Lambda= \big(P_1-P_2\bar{F}_2^{-1}\bar{F}_1\big)^{-1}$.

  By (\ref{eq:Method:stable4NLS_Proof1}), this inverse mapping can be denoted as a function of two variables, $\Psi^{-1}(\varphi_\Romannum{1},\varphi_\Romannum{2})$. For any trajectory $x(t)$ of (\ref{NLS}) satisfying $x(t) \in \mathcal{M} $, we have $G(x)\equiv0$, i.e., $\varphi_\Romannum{2} \equiv 0$. Thus we only need to consider the motion of $\varphi_\Romannum{1}$. The dynamics of $\varphi_\Romannum{1}$ is
    \begin{equation}\label{eq:Method:stable4NLS_LinearizedProof1}
      \dot{\varphi_\Romannum{1}}=\left.Pf(x)\right|_{x=\Psi^{-1}(\varphi_\Romannum{1},0)+x_e}.
    \end{equation}
  The linearization of dynamics (\ref{eq:Method:stable4NLS_LinearizedProof1}) around $\varphi_\Romannum{1}=0$ gives
  \begin{eqnarray*}
    \dot{\varphi}_\Romannum{1} &=& P\left.\frac{\partial f}{\partial x}\right|_{x=x_e} \left.\frac{\partial \Psi^{-1}}{\partial \varphi_\Romannum{1}}\right|_{\varphi_\Romannum{1}=0} \varphi_\Romannum{1}\\
    &=& P\bar{A} \begin{bmatrix} I_{n-m}\\ -\bar{F}_2^{-1}\bar{F}_1\end{bmatrix} \left( P\begin{bmatrix} I_{n-m}\\ -\bar{F}_2^{-1}\bar{F}_1 \end{bmatrix} \right)^{-1}   \varphi_\Romannum{1}.
  \end{eqnarray*}
  Due to (\ref{eq:Method:stable4NLS_Con2}), system (\ref{eq:Method:stable4NLS_LinearizedProof1}) is exponentially stable, which implies the assertion.
\end{proof}

\subsection{Stability Analysis of Platonic Formations}

In this subsection, in order to investigate stability of equilibrium $\xi_s= \xi_s^{\{p,q\}}$ in subsystem (\ref{eq:SF:newCorDynf1}) restricted to manifold $\mathcal{M}_C$ in (\ref{eq:Method:constraintManifold}), a method is proposed to show that it is sufficient to examine stability of (\ref{eq:SF:newCorDynf1}) restricted to a simpler manifold $\mathcal{\overline{M}}_C$ defined by less constraints.

The aim of this method is twofold, one is obviously that less
constraints considered simplify the process of stability
analysis. What is more, by implicit function theorem, in order to substitute $m_0=D_\xi-2N_0$
redundant variables in $\xi$, we need $m_0$ nonsingular constraints
$g_1, \cdots, g_{m_0}$ having $\left.\frac{\partial G}{\partial
    \xi_s}\right|_{\xi_s=\xi_s^{\{p,q\}}}$ with full row rank, where
$G(\xi_s)=[g_{1}(\xi_s), \cdots, g_{m_0}(\xi_s)]^T$. With the help of the proposed method, the stability of Platonic formations can be achieved, even though less than $m_0$ nonsingular constraints can be found.

In the following, we restrict control gain function $h(\cdot)$ to a concrete form. For other control gain functions, the stability analysis can be done with a same procedure.
\begin{assumption}\label{As:PF:h}
  The gain function $h(\cdot)$ in (\ref{eq:PF:Control}) has the structure $h(\theta_{ij})=\exp(2\cos(\theta_{ij}))$.
\end{assumption}

We denote $\hat{A}=\left.\frac{\partial \hat{f}_s(\xi_s)}{\partial \xi_s}\right|_{\xi_s=\xi_s^*}$, where $\xi_s^*=\xi_s^{\{p,q\}}$ is the equilibrium of subsystem (\ref{eq:SF:newCorDynf1}) corresponding to formation $\mathcal{M}_{\{p,q\}}$. Then the next theorem shows that stability of the system restricted to a less constrained manifold $\mathcal{\overline{M}}_C$ can imply that to $\mathcal{M}_C$.
\begin{theorem}
  Let $C_1, C_2, \dots, C_m$ be a sequence of 4-element subsets of $\mathcal{V}$. Denote $G(\xi_s)\!=\![g_{C_1}\!(\xi_s), \!\cdots\!, g_{C_m}\!(\xi_s)]^T$ with $g_{C_i}(\cdot)$ following definition (\ref{eq:Method:Constraint}). In closed-loop system (\ref{eq:PF:ClosedSystem}), the desired formation $\mathcal{M}_{\{p,q\}}$ is exponentially stable, if
  \begin{enumerate}[label=(\alph*)]
    \item $\left.\frac{\partial G}{\partial \xi_s}\right|_{\xi_s=\xi_s^*}$ has full row rank,
    \item $T_1 \hat{A} T_1^T$ is Hurwitz,
  \end{enumerate}
  where $T_1=[v_1,\cdots,v_r]^T$ whose rows constitute an orthonormal basis of the maximal $\hat{A}$-invariant subspace containing in $\mathbf{V}=\big\{\xi_s: \left.\frac{\partial G}{\partial \xi_s}\right|_{x=\xi_s^*} \xi_s =0 \big\}$.
\end{theorem}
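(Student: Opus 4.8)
The plan is to deduce the statement entirely from the machinery already in place---Theorem~\ref{Thm:Method:NonlinearRestrict}, Proposition~\ref{Thm:iff_SRS}, and Remark~\ref{Rmk:SF:maniToPoint1}---by exploiting the set inclusion between the manifold cut out by the chosen subset of constraints and the full constraint manifold. First I would introduce the reduced manifold $\mathcal{\overline{M}}_C=\big\{\xi_s : G(\xi_s)=0\big\}$ defined by only the $m$ selected constraints $g_{C_1},\dots,g_{C_m}$, and record two facts: the equilibrium lies on it, $\xi_s^*\in\mathcal{M}_C\subseteq\mathcal{\overline{M}}_C$, because by Lemma~\ref{Thm:Method:IdentityFor4P} every $4$-element determinant identity holds identically on $(\mathcal{S}^2)^{N_0}$ (so $\xi_s^*$ satisfies all of them); and the inclusion $\mathcal{M}_C\subseteq\mathcal{\overline{M}}_C$ holds simply because fewer constraints describe a larger set.

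Next I would apply Theorem~\ref{Thm:Method:NonlinearRestrict} to subsystem (\ref{eq:SF:newCorDynf1}) with the manifold $\mathcal{\overline{M}}_C$. Its hypothesis requires that $\left.\frac{\partial G}{\partial \xi_s}\right|_{\xi_s^*}$ have full row rank, which is exactly condition (a) and which also guarantees that $\mathcal{\overline{M}}_C$ is a genuine embedded manifold near $\xi_s^*$. The remaining hypothesis is that the linearization $\dot{\bar{\xi}}_s=\hat{A}\bar{\xi}_s$ restricted to the tangent subspace $\mathbf{V}=\big\{\xi_s : \left.\frac{\partial G}{\partial \xi_s}\right|_{\xi_s^*}\xi_s=0\big\}$ be asymptotically stable. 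By Proposition~\ref{Thm:iff_SRS} together with the block-triangular form (\ref{3PartTrans1}), this linear restricted stability is equivalent to $T_1\hat{A}T_1^T$ being Hurwitz, where $T_1$ spans the maximal $\hat{A}$-invariant subspace $\mathbf{V}^*$ inside $\mathbf{V}$---precisely condition (b). Hence (a) and (b) yield exponential stability of $\xi_s^*$ for subsystem (\ref{eq:SF:newCorDynf1}) restricted to $\mathcal{\overline{M}}_C$.

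The crucial transfer step is then to pass from $\mathcal{\overline{M}}_C$ to $\mathcal{M}_C$. Because $\mathcal{M}_C\subseteq\mathcal{\overline{M}}_C$, any trajectory of (\ref{eq:SF:newCorDynf1}) that remains in $\mathcal{M}_C$ a fortiori remains in $\mathcal{\overline{M}}_C$; keeping the neighbourhood and rate constants furnished by the $\mathcal{\overline{M}}_C$-restricted stability, every such trajectory starting near $\xi_s^*$ obeys the exponential bound $\|\xi_s(t)-\xi_s^*\|\le\alpha e^{-\beta t}$. This delivers exponential stability of $\xi_s^*$ restricted to the full manifold $\mathcal{M}_C$. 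Finally, by Remark~\ref{Rmk:SF:maniToPoint1} and since the coordinates transformation $\Phi$ is a local diffeomorphism that preserves exponential rates, stability of $\xi_s^*$ in (\ref{eq:SF:newCorDynf1}) restricted to $\mathcal{M}_C$ is equivalent to exponential stability of $\mathcal{M}_{\{p,q\}}$ in the closed-loop system (\ref{eq:PF:ClosedSystem}), and the assertion follows.

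The point most in need of care---and the real content of the result---is the transfer direction. The value of the construction is that one is allowed to use a strictly smaller collection of constraints, with $m$ possibly below $m_0=(N_0-2)(N_0-3)/2$, for which a full-row-rank Jacobian can actually be exhibited, yet still conclude stability on the physically relevant manifold $\mathcal{M}_C$. I would therefore emphasize that the implication runs from the larger manifold to the smaller sub-manifold (not the reverse): enlarging the manifold enlarges the family of admissible trajectories, so restricted stability on $\mathcal{\overline{M}}_C$ is the stronger statement and automatically descends to $\mathcal{M}_C$. I would also double-check that $\xi_s^*$ is genuinely an equilibrium of (\ref{eq:SF:newCorDynf1}) lying on $\mathcal{\overline{M}}_C$ and that condition (a) is what legitimizes the application of Theorem~\ref{Thm:Method:NonlinearRestrict}.
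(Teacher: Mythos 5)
Your proposal is correct and follows essentially the same route as the paper: apply Proposition~\ref{Thm:iff_SRS} and Theorem~\ref{Thm:Method:NonlinearRestrict} to get exponential stability restricted to the reduced manifold $\mathcal{\overline{M}}_C$, then transfer to $\mathcal{M}_C$ via the inclusion $\mathcal{M}_C\subseteq\mathcal{\overline{M}}_C$ and conclude with Remark~\ref{Rmk:SF:maniToPoint1}. The only detail the paper spells out that you leave implicit is the verification that $\mathcal{M}_C$ is invariant under the flow (shown there by computing $\frac{d}{dt}(\Gamma_i^T\Gamma_i)=0$ and invoking Lemma~\ref{Thm:Method:IdentityFor4P}), which is exactly what licenses your claim that the relevant trajectories remain in $\mathcal{M}_C$.
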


\begin{proof}
  According to (a) and (b), Proposition~\ref{Thm:iff_SRS} gives the linearized system $\dot{\overline{\xi}}_s=\hat{A}\overline{\xi}_s$ restricted to $\mathbf{V}$ is stable where $\overline{\xi}_s=\xi_s-\xi_s^*$. Then, by Theorem~\ref{Thm:Method:NonlinearRestrict}, nonlinear system (\ref{eq:SF:newCorDynf1}) restricted to manifold $\mathcal{\overline{M}}_C =\{\xi_s: g_{C_i}(\xi_s)=0, i=1, \cdots, m\}$ is exponentially stable at $\xi_s^*$.

  Let $\mathcal{M}_C$ follows the definition in (\ref{eq:Method:constraintManifold}) and function $V_i(t)=\Gamma_i^T\Gamma_i$. The derivative of $V_i(t)$ along the trajectory of system (\ref{eq:PF:ClosedSystem}) satisfies
  $\dot{V}_i(t)=2\Gamma_i^T\widehat{\Gamma}_i^2 \sum_{j\in \mathcal{N}_i} h(\theta_{ij})\Gamma_j=0,$
  which implies $\Gamma_i(t) \in \mathcal{S}^2, \forall t>0$ if $\Gamma_i(0) \in \mathcal{S}^2$. By Lemma~\ref{Thm:Method:IdentityFor4P}, this leads to $\mathcal{M}_C$ being an invariant manifold in system (\ref{eq:SF:newCorDynf1}).

  As (\ref{eq:SF:newCorDynf1}) restricted to manifold $\mathcal{\overline{M}}_C$ is exponentially stable at $\xi_s^*$, there is a neighbourhood of $\xi_s^*$, denoted by $B(\xi_s^*)$. For any trajectory $\xi_s(t)$ of (\ref{eq:SF:newCorDynf1}) satisfying $\xi_s(0) \in B(\xi_s^*) \cap \mathcal{M}_C$, due to the invariance of $\mathcal{M}_C$ and the fact that $\mathcal{M}_C \subset \mathcal{\overline{M}}_C$, we have $\xi_s(t)\in \mathcal{\overline{M}}_C$, $\forall t  \geq 0$. This implies that there are positive constants $\alpha, \beta$ such that $\|\xi_s(t)-\xi_s^*\| < \alpha e^{-\beta t}$, $\forall t  \geq 0$.

  By Remark~\ref{Rmk:SF:maniToPoint1}, we have the desired formation $\mathcal{M}_{\{p,q\}}$ is exponentially stable.
\end{proof}


Following the above theorem, Algorithm~1 is given to verify exponential stability of formation $\mathcal{M}_{\{p,q\}}$ for closed-loop system (\ref{eq:PF:ClosedSystem}). In  Algorithm~1, we denote $\sigma(\hat{A})$ as the set of all eigenvalues of matrix $\hat{A}$, and $\mathbb{C}^-$ as the left-half complex plane.

\begin{figure}[t]
\resizebox{0.95\columnwidth}{!}{%
{\small
\begin{tabular}{p{\columnwidth}}
\hline
{\bf Algorithm 1: Stability Test for Platonic Solids}\\
\hline
\textbf{Step 1}. Set $m=0$, $\Upsilon_0=\emptyset$, and $T_0=I$.\\
\textbf{Step 2}. If $\sigma(T_m\hat{A}T_m^T) \subset \mathbb{C}^-$, go to Step 5.\\
\quad \quad \quad If $Card[\sigma(T_m\hat{A}T_m^T) \cap \mathbb{C}^-]<2N_0-3$, go to Step 6.\\
\textbf{Step 3}. Set $m=m+1$, and $\Upsilon_m=\Upsilon_{m-1} \cup \{C_m\}$, \\
\quad \quad \quad  where $C_m$ is a 4-element subset of $\mathcal{V}$, $\big\{g_S(\xi_s)=0:$ \\
\quad \quad \quad   $S\in\Upsilon_m \big\}$ are nonsingular constraints at $\xi_s^*$. \\
\quad \quad \quad  Denote $G_m(\xi_s)=\big[g_{S}(\xi_s)\big]_{S\in\Upsilon_m}$.\\
\textbf{Step 4}. Compute matrix $T_m=[v_1^m,\cdots,v_{r_m}^m]^T$ whose rows \\
\quad \quad \quad constitute an orthonormal basis of the maximal \\
\quad \quad \quad $\hat{A}$-invariant subspace containing in $\mathbf{V_m}=\big\{\xi_s:$ \\
\quad \quad \quad $ \left.\frac{\partial G_m}{\partial \xi_s}\right|_{\xi_s=\xi_s^*} \xi_s =0 \big\}$. Go to Step 2.\\
\textbf{Step 5}. The formation $\mathcal{M}_{\{p,q\}}$ is exponentially stable. \\
\quad \quad \quad Let $m_{max}=m$. End the algorithm.\\
\textbf{Step 6}. $\mathcal{M}_{\{p,q\}}$ is not exponentially stable. \\
\quad \quad \quad End the algorithm.\\
\hline
\end{tabular}
}}
\end{figure}

Although we can obtain all possible graphs satisfying the
symmetries assumption according to Section~\ref{sec:graph}, in order to reach the conclusion in a compact way,
we restrict the inter-agent graphs to some specific ones by the following assumption. We note that the systems under other possible graphs can also be investigated in the same manner.

\begin{assumption}\label{ass:GD:graphChoice}
  The inter-agent topology $\mathcal{G}_{\{p,q\}}$  is a complete graph, if $p=3$. Graph $\mathcal{G}_{\{4,3\}}$ and $\mathcal{G}_{\{5,3\}}$ are listed in Fig.~\ref{Fig:GD:Graph_820}(a) and Fig.~\ref{Fig:GD:Graph_820}(b) respectively.
\end{assumption}

\begin{figure}[ht] 
    \centering
    \subfigure[Inter-agent graph $\mathcal{G}_{\{4,3\}}$]{%
        \includegraphics[width=0.13\textwidth]{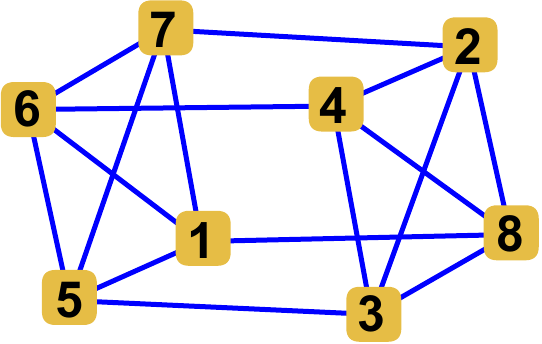}
        \label{Fig:GD:Graph_N8}}\!
    \subfigure[Inter-agent graph $\mathcal{G}_{\{5,3\}}$]{%
        \includegraphics[width=0.13\textwidth]{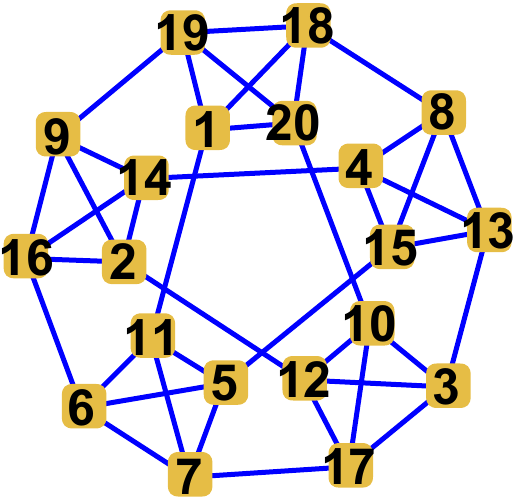}
        \label{Fig:GD:Graph_N20}}\!
    \subfigure[A $\{4,3\}$ compound]{%
        \includegraphics[width=0.09\textwidth]{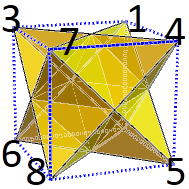}
        \label{Fig:Int:2Tetrahedron}}\!
    \subfigure[A $\{5,3\}$ compound]{%
        \includegraphics[width=0.09\textwidth]{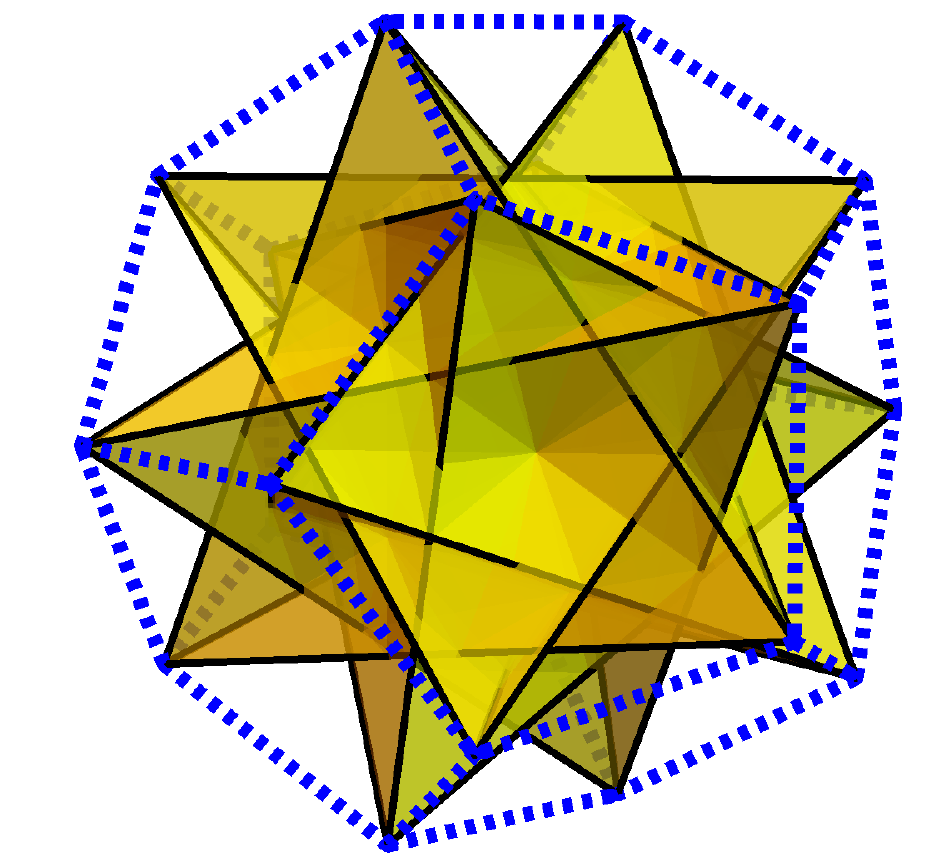}
        \label{Fig:Int:5Tetrahedron}}\!
    \caption{Graphs of $\{4,3\}$ and $\{5,3\}$: (c) depicts a cube composed by tetrahedra $\{1,5,6,7\}$ and $\{3,4,8,2\}$, which corresponds to the graph in (a). And (b),(d) have the similar correspondence. }
    \label{Fig:GD:Graph_820}
\end{figure}

Then stability of the five Platonic solids can be achieved by Algorithm~1, we state this result in the next Proposition.
\begin{proposition}
  Under Assumption~\ref{As:PF:h} and Assumption~\ref{ass:GD:graphChoice}, in closed-loop system (\ref{eq:PF:ClosedSystem}), the regular polyhedra formations $\mathcal{M}_{\{p,q\}}$ entail exponential stability, for all integer $p$, $q$ satisfying inequality (\ref{eq:pre:relation_p_q}).
\end{proposition}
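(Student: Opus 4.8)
The plan is to chain two reductions already established and then reduce the whole claim to a finite computation. By Remark~\ref{Rmk:SF:maniToPoint1}, exponential stability of each desired formation $\mathcal{M}_{\{p,q\}}$ in~(\ref{eq:PF:ClosedSystem}) is equivalent to exponential stability of the equilibrium $\xi_s=\xi_s^{\{p,q\}}$ of the reduced subsystem~(\ref{eq:SF:newCorDynf1}) restricted to the constraint manifold $\mathcal{M}_C$ of~(\ref{eq:Method:constraintManifold}); and by the theorem stated just before Algorithm~1 it suffices to exhibit a family of $4$-element determinant constraints for which hypotheses (a) (full row rank of the constraint Jacobian) and (b) ($T_1\hat{A}T_1^T$ Hurwitz) hold. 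Since inequality~(\ref{eq:pre:relation_p_q}) admits only the five pairs $\{3,3\},\{3,4\},\{3,5\},\{4,3\},\{5,3\}$, I would simply run Algorithm~1 on each solid and check that it terminates at Step~5. For a fixed $\{p,q\}$ the first step is to fix the graph $\mathcal{G}_{\{p,q\}}$ prescribed by Assumption~\ref{ass:GD:graphChoice} and to write down $\xi_s^{\{p,q\}}$, the vector of pairwise inner products $\Gamma_i^T\Gamma_j$ at the regular polyhedron; by the symmetry of the solid these entries take only a few distinct values, so $\xi_s^{\{p,q\}}$ is explicit.

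Next I would assemble the Jacobian $\hat{A}=\left.\partial \hat{f}_s/\partial \xi_s\right|_{\xi_s=\xi_s^{\{p,q\}}}$ by differentiating the closed-form expression for the entries of $\hat{f}_s$ displayed below~(\ref{eq:SF:newCorDynf}), substituting the explicit gain $h(\theta_{ij})=\exp(2\cos\theta_{ij})$ of Assumption~\ref{As:PF:h} together with $\cos\theta_{ij}=\xi_{ij}$. Algorithm~1 then proceeds mechanically: from $T_0=I$ one tests whether $\sigma(T_m\hat{A}T_m^T)\subset\mathbb{C}^-$; while the count of stable eigenvalues still reaches $2N_0-3$ one appends a nonsingular constraint $g_{C_m}$ from Lemma~\ref{Thm:Method:IdentityFor4P}, recomputes an orthonormal basis $T_m$ of the maximal $\hat{A}$-invariant subspace inside the current kernel $\mathbf{V}_m$, and repeats. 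Because Step~3 only admits constraints whose gradients are independent at $\xi_s^{\{p,q\}}$, hypothesis (a) holds automatically at termination. The value of this restriction-based procedure is exactly the point emphasized earlier: it certifies stability using far fewer than the $m_0=(N_0-2)(N_0-3)/2$ constraints of~(\ref{eq:SF:DegreeRuleOut}), which matters because the $g_C$ generally have dependent gradients at the highly symmetric equilibrium, so the full constraint set cannot be imposed directly through the implicit function theorem.

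To keep these computations tractable --- the dimension of $\xi_s$ is $\binom{N_0}{2}$, equal to $66$ for the icosahedron and $190$ for the dodecahedron --- I would exploit the polyhedral symmetry. The matrix $\hat{A}$ commutes with the linear action induced on $\xi_s$ by the vertex permutations in $\mathcal{H}_{\{p,q\}}$, so it block-diagonalizes along the isotypic components of that representation; this reduces both the spectral test of Step~2 and the computation of the invariant subspace in Step~4 to a few small blocks. Collecting the outcomes, at termination both (a) and (b) hold, so the theorem before Algorithm~1 yields exponential stability of $\xi_s^{\{p,q\}}$ restricted to $\mathcal{M}_C$, and Remark~\ref{Rmk:SF:maniToPoint1} transfers this to $\mathcal{M}_{\{p,q\}}$ for every admissible $\{p,q\}$.

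The hard part will be verifying hypothesis (b) cleanly, i.e.\ that the maximal $\hat{A}$-invariant subspace produced by the algorithm coincides with the genuine $(2N_0-3)$-dimensional tangent space $T_{\xi_s^{\{p,q\}}}\mathcal{M}_C$ and that the restricted matrix carries no eigenvalue on the imaginary axis. Since $\xi_s$ already quotients out the rigid rotation there is no structural center direction to strip away, but one must still exclude marginal modes created by the constraints; certifying the Hurwitz property for the icosahedron and dodecahedron is where the unavoidable symbolic/numerical effort concentrates, and the isotypic reduction is precisely what renders it feasible.
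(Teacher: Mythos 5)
Your proposal follows essentially the same route as the paper: both reduce the claim via Remark~\ref{Rmk:SF:maniToPoint1} and the theorem preceding Algorithm~1 to a finite check, and then run Algorithm~1 on each of the five admissible pairs $\{p,q\}$ (the paper simply reports the outcomes, $m_{max}=0$ for $\{3,3\},\{4,3\},\{3,5\}$, $m_{max}=3$ for $\{3,4\}$, and $m_{max}=12$ for $\{5,3\}$). One small correction to your closing paragraph: hypothesis (b) does not require the maximal $\hat{A}$-invariant subspace to coincide with the $(2N_0-3)$-dimensional tangent space of $\mathcal{M}_C$ --- the whole point of the restriction method is that exponential stability on the larger manifold $\mathcal{\overline{M}}_C$ already suffices, because $\mathcal{M}_C\subset\mathcal{\overline{M}}_C$ and $\mathcal{M}_C$ is invariant.
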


This proposition is obtained by applying Algorithm~1. For the formations $\{3,3\}$, $\{4,3\}$, and $\{3,5\}$, we obtain $m_{max}=0$ and $\Upsilon_0=\emptyset$ when the algorithm achieves their stabilities.
As investigating the formation $\{3,4\}$, the algorithm provides its stability  with $m_{max}=3$ and the nonsingular constraints $\Upsilon_3=\big\{g_{\{1,2,3,i\}} \big\}_{i\in S_3}$, where $S_3=\{4,5,6\}$. In the scenario of dodecahedron  $\{5,3\}$, the algorithm ends with $m_{max}=12$ and the nonsingular constraints $\Upsilon_{12}=\big\{g_{\{1,2,3,i\}} \big\}_{i\in S_{12}}$, where $S_{12}=\{4,5,6,7,8,9,10,11,12,13,14,15\}$. \rev{We can see that by the virtue of the method proposed, the number of regular constraints needed in stability analysis $m_{max}$ is far less than the number of redundant variables $m_0$ in $\xi$, for instant originally $m_0=45$ for solid $\{3,5\}$ and $m_0=153$ for solid$\{5,3\}$.}

To conclude this section, we provide some clues to the choice of graphs in Assumption~\ref{ass:GD:graphChoice}.
A complete graph can solve Problem~1 for $\{p,q\}$ with $p=3$, but not for solids $\{4,3\}$ and $\{5,3\}$. This is because the faces of $\{4,3\}$ and $\{5,3\}$ are regular quadrilaterals and regular pentagons respectively, which are not structurally rigid and prone to be bent or flexed under lateral interactions \cite{crapo1979structural}. 
To construct these two non-rigid polyhedra, we use the regular tetrahedra as \emph{building blocks}.

In light of the concept of polyhedral compounds, as shown in Fig.~\ref{Fig:GD:Graph_820}(c-d), the vertices of a two-tetrahedron compound can compose a cube, and those of a five-tetrahedron compound can build a dodecahedron. Thus we employ the graphs constituted by associating two and five $\mathcal{P}_{\{3,3\}}$ as inter-agent topologies for $\{4,3\}$ and $\{5,3\}$, which are shown in Fig.~\ref{Fig:GD:Graph_820}(a-b).
We note that this machinery of constructing formations by the simplex
blocks is potentially applicable to more formation problems.



\section{Simulation}\label{sec:simu}
In this section, we present some numerical simulation results to illustrate the convergence of desired formations governed by the proposed control (\ref{eq:PF:Control}).
Under Assumption~\ref{As:PF:h} and Assumption~\ref{ass:GD:graphChoice}, the trajectories of closed-loop system (\ref{eq:PF:ClosedSystem}) with random initial conditions are simulated. The resulting trajectories for the five Platonic solids are shown in Fig.~\ref{Fig:Sim:5solids} (a)-(e) respectively, in which the initial reduced attitudes are marked by pentagrams and the final states are marked by circles.


\begin{figure}[ht]
    \subfigure[Trajectory of $\{3,3\}$]{%
        \includegraphics[width=0.21\textwidth]{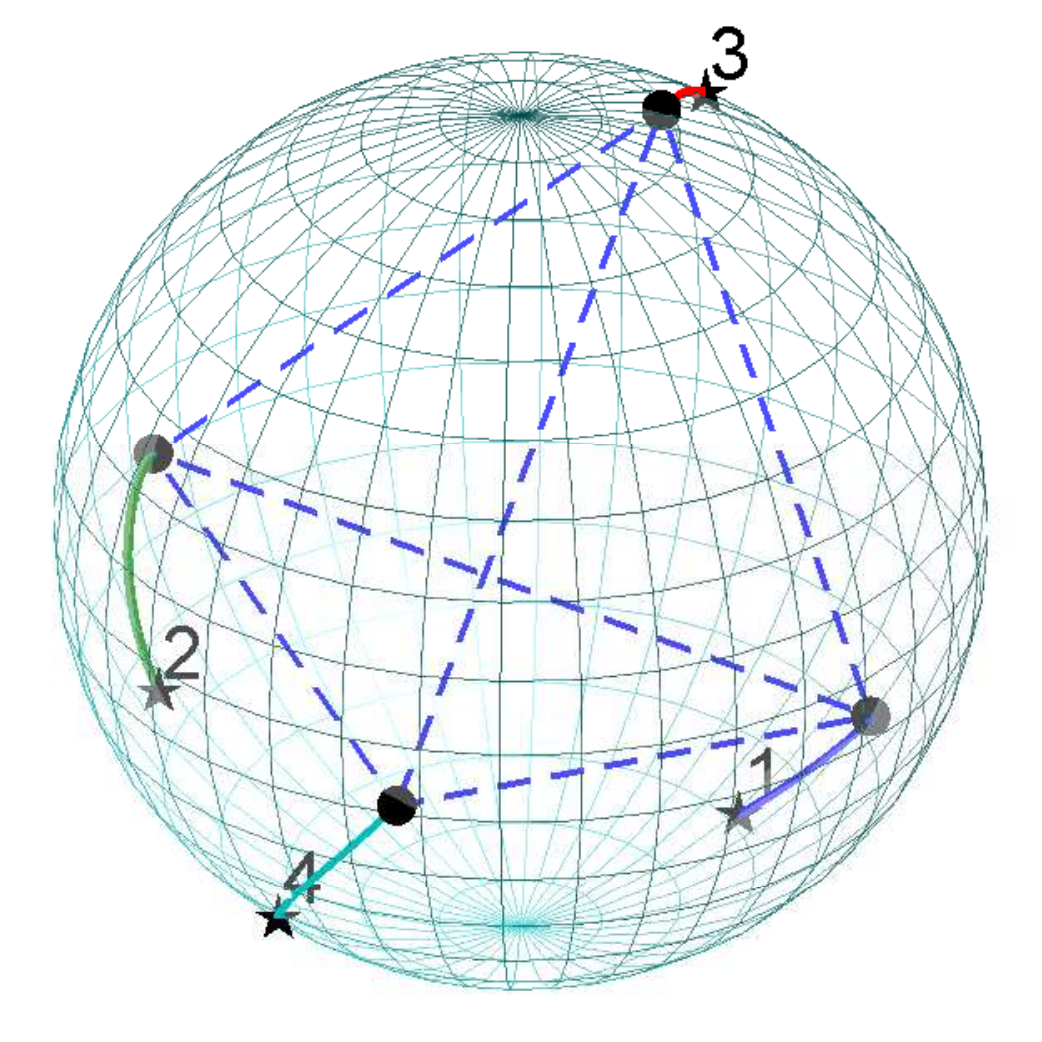}
        \label{Fig:Sim:Tetrahedron}} \,
    \subfigure[Trajectory of $\{3,4\}$,]{%
        \includegraphics[width=0.21\textwidth]{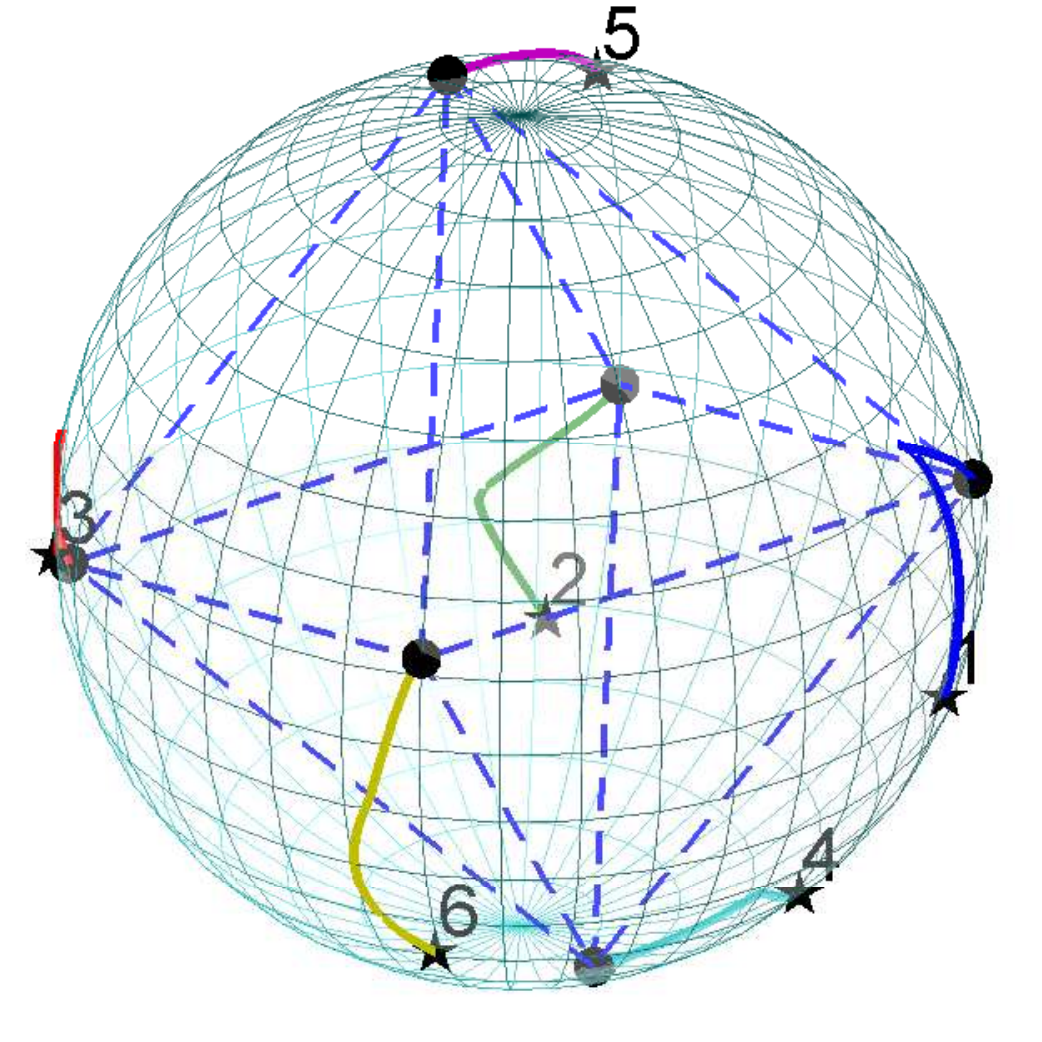}
        \label{Fig:Sim:Octahedron}}
    \subfigure[Trajectory of $\{3,5\}$,]{%
        \includegraphics[width=0.21\textwidth]{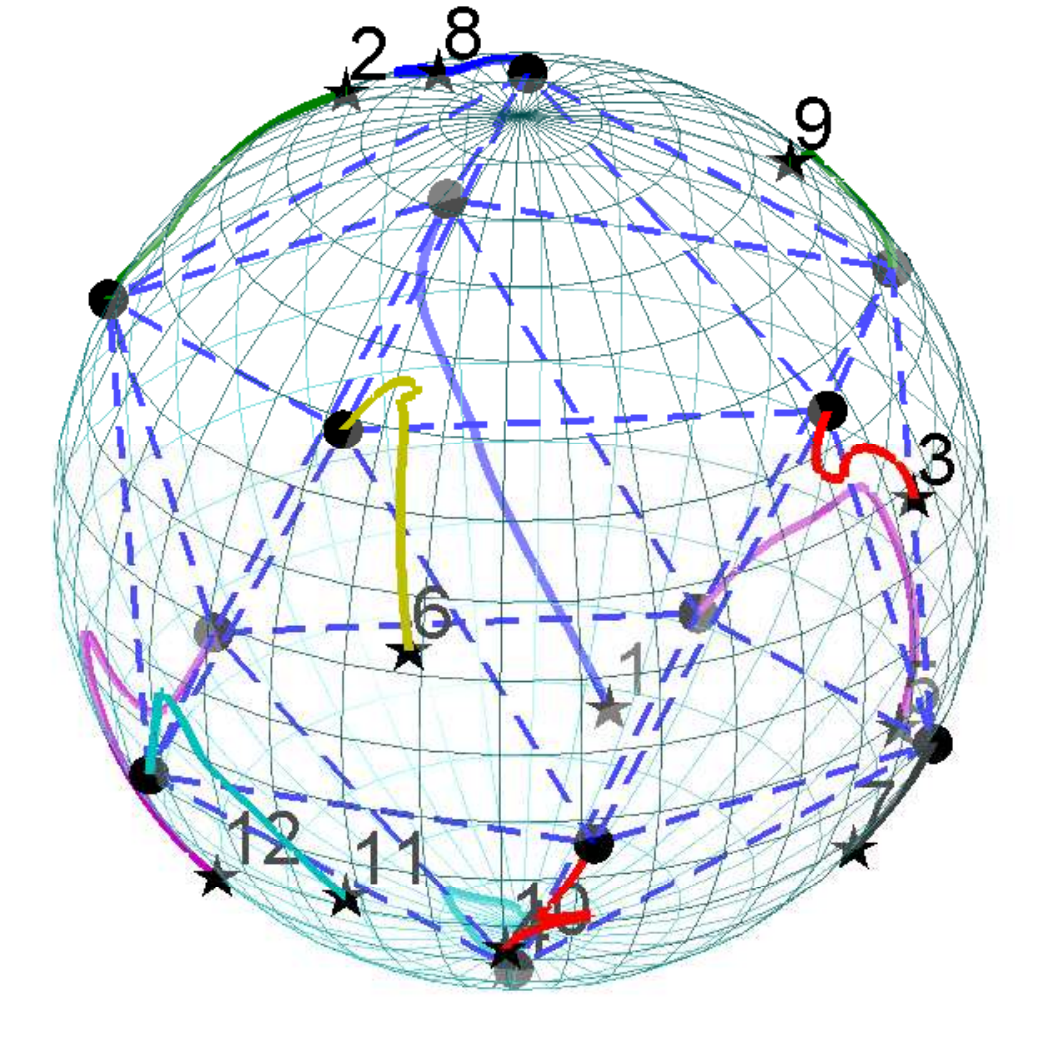}
        \label{Fig:Sim:Icosahedron}}\,
    \subfigure[Trajectory of $\{4,3\}$]{%
        \includegraphics[width=0.21\textwidth]{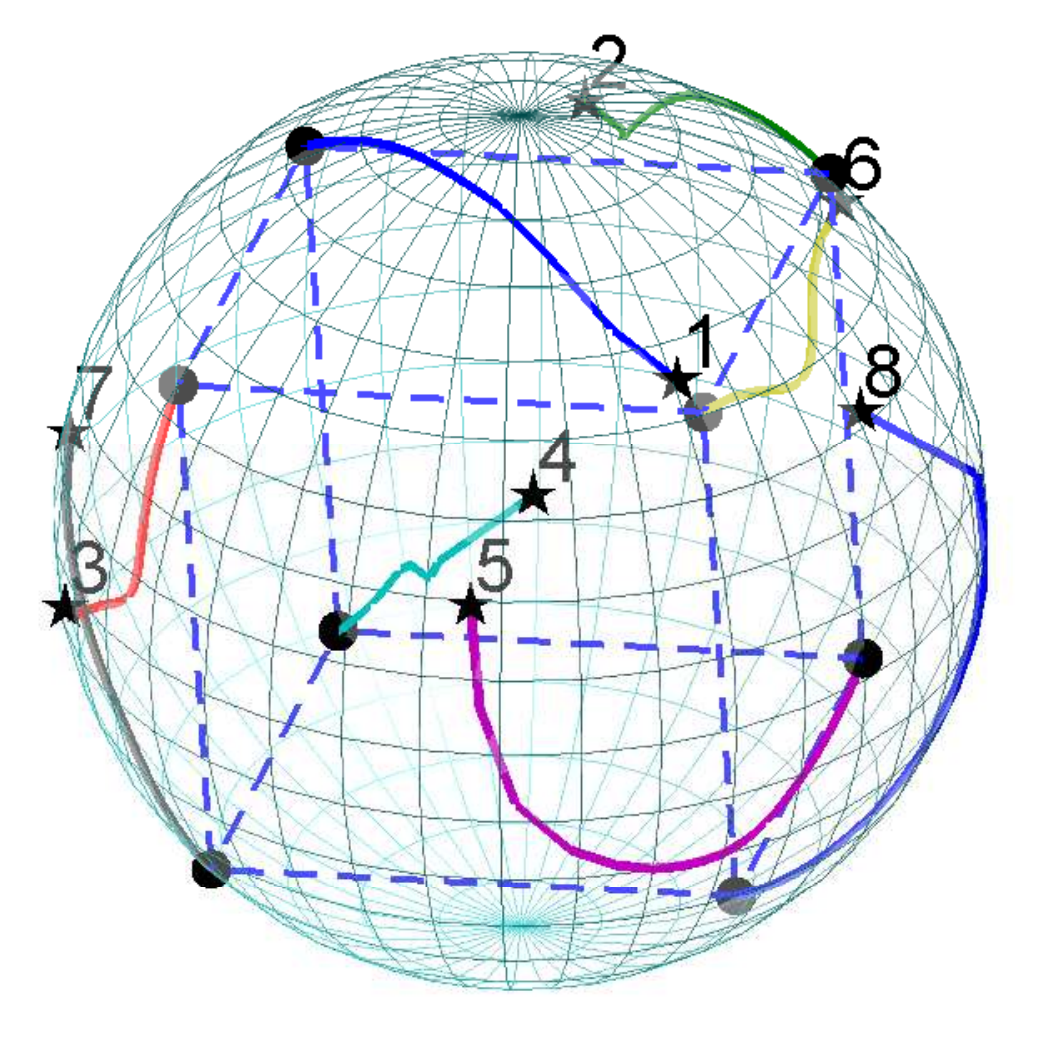}
        \label{Fig:Sim:Cube}}
    \subfigure[Trajectory of $\{5,3\}$]{%
        \includegraphics[width=0.21\textwidth]{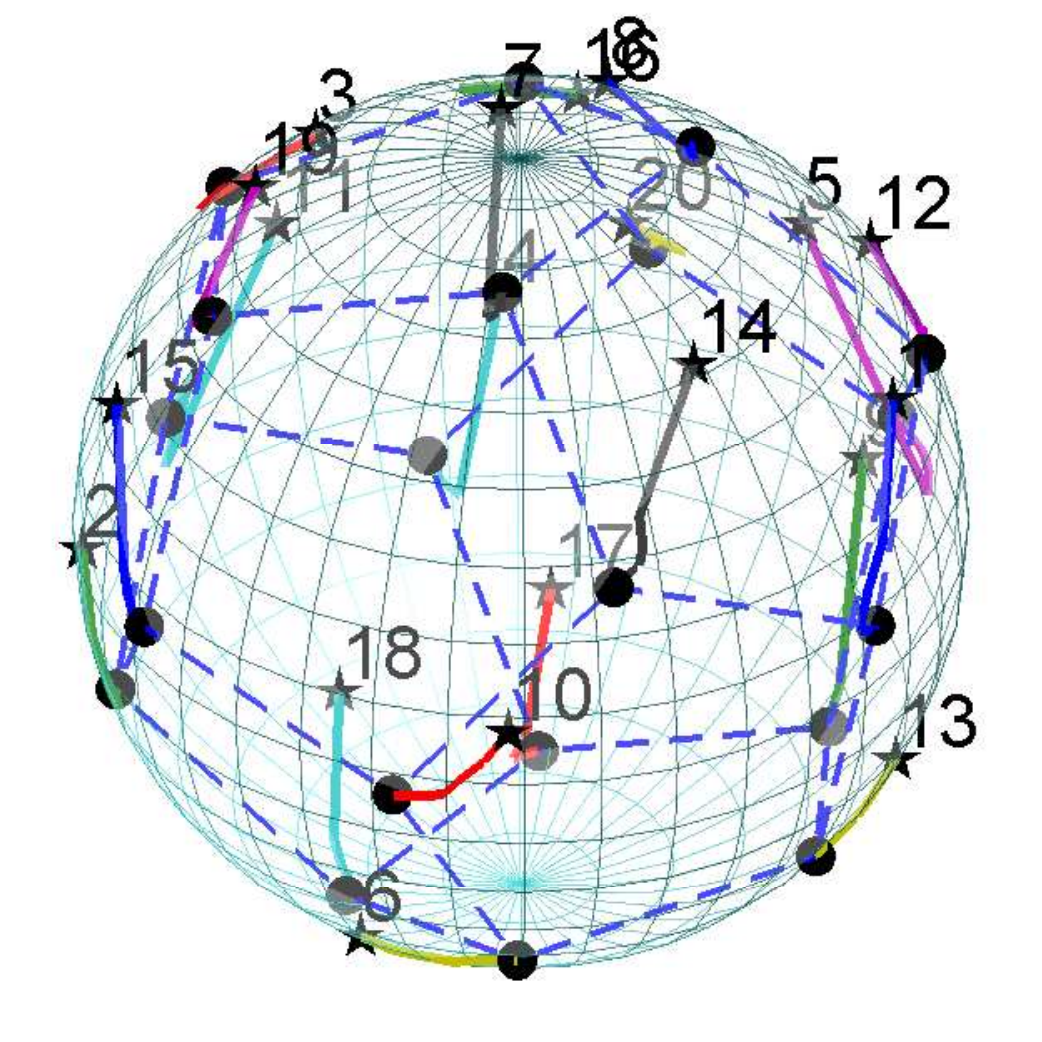}
        \label{Fig:Sim:Dodecahedron}}\centering
    \caption{Simulation results of reduced attitude systems for five Platonic solids}
    \label{Fig:Sim:5solids}
\end{figure}

\section{Conclusion and Future Work}\label{sec:conclusion}
This work studies formation control of reduced attitudes for regular
polyhedra patterns. The proposed method does not need to contain
formation error in the control law
to reduce the "distance" from the current formation to the desired formation, and shows that it is indeed possible to obtain formation by the geometry of space and the inter-agent topology with a
relatively simple control law. In the future work, how to achieve almost-global stability of Platonic solids formations will be investigated.


\bibliographystyle{plain}      
\bibliography{ref}           



\appendix
\section{Proofs}

\subsection{Proof of \emph{Proposition} \ref{thm:Int:twoMEquivalent}} \label{sec:AppendixPfM}
It is obvious that $\mathcal{M}_{\{p,q\}}\subset \mathcal{M}'_{\{p,q\}}$. We need to prove another side.
For any vertex $\ell \in \mathcal{V}$,  the corresponding symmetry is $(R_\ell, \sigma_\ell) \in \mathcal{H}_{\{p,q\}}$. Denote the cycle containing vertex $k \in \mathcal{V}$ in the cycle decomposition of $\sigma_\ell$ by $\mathcal{C}_\ell(k)$.  Then the definition \eqref{eq:Int:equilibriaSet2} implies that for $\mathbf{\Gamma}^* \in \mathcal{M}'_{\{p,q\}}$,
\begin{equation}\label{eq:Int:pf1}
 \Gamma_{\sigma_\ell(i)}^*=R_\ell(\mathbf{\Gamma}^*)\Gamma_i^*.
\end{equation}

Next we show that for $\mathbf{\Gamma}^* \in \mathcal{M}'_{\{p,q\}}$, it holds that ${\Gamma}^*_i \neq  \Gamma^*_j$, $\forall i \neq j$. Since $\mathbf{\Gamma}^* \in \mathcal{M}'_{\{p,q\}}$ there exist $m$ and $n$ such that $\widehat{\Gamma}_m \Gamma_n\neq 0 $. We consider the permutation $\sigma_m$. Its cycle $\mathcal{C}_m(m)$ has to be a singleton cycle which is $(m)$, since $R_m(\mathbf{\Gamma}^*)=\exp(\frac{2\pi}{q}\widehat{\Gamma}_m^*)$ is a rotation about reduced attitude $m$. Moreover $\mathcal{C}_m(n)$ must contain elements other than $n$, otherwise ${\Gamma}^*_m = \pm\Gamma^*_n $. According to \eqref{eq:Int:pf1}, all reduced attitudes $\Gamma^*_j$ for $j \in \mathcal{C}_m(n)$ do not overlap with each other and $\Gamma^*_j \neq \pm\Gamma^*_m$. Then by repeating the forgoing procedure for the vertices already shown to be mutually unequal, we can get all reduced attitudes in $\mathbf{\Gamma}^*$ do not overlap with each other.

Furthermore, 
for vertex $i \in \mathcal{V}$ the non-sigleton cycle $\widehat{\mathcal{C}}_i$ in permutation $\sigma_i$ is defined as the segment between $i$ and any $j\in \widehat{\mathcal{C}}_i$ is an edge of $\{p,q\}$. Due to \eqref{eq:Int:pf1}, all these segments $(i, j)$ are of the same length $\forall j\in \widehat{\mathcal{C}}_i$. This process can be extended to all the edges. Hence we obtain that $\mathbf{\Gamma}^*$ is a polyhedron with identical length of all edges, i.e., $\mathbf{\Gamma}^* \in \mathcal{M}_{\{p,q\}}$.
\hspace*{\fill}$\square$

\subsection{Proof of \emph{Lemma} \ref{thm:controlability}} \label{sec:AppendixPfControllability}
We consider a system
  \begin{align}\label{eq:Method:pfObservable1}
        &\dot{z}_2(t)= T_2AT_2^T z_2(t), \;\; z_2 \in \mathbb{R}^{(n-m-r)} \nonumber \\
        &y=T_3AT_2^T z_2(t). \nonumber
  \end{align}
Suppose $(T_3AT_2^T, T_2AT_2^T)$ is not observable, then there is a $z_2^*(0)\neq0$ such that the trajectory $z_2^*(t)=\exp^{(T_2AT_2^T)t} z_2^*(0)$ fulfills $ T_3AT_2^T z_2^*(t) = 0$ for any $t\geq 0$.

Then for a trajectory $z^*(t)=[z_1^{T}(t),z_2^{*T}(t),0^T]^T $, we have $x^*(t)=T^Tz^*(t) \in \mathbf{V}$ for $t \geq 0$. Moreover, if $z_1(t)$ satisfies $\dot{z}_1(t)=T_1AT_1^T z_1(t)+ T_1AT_2^T z_2(t) $, it can be verified that $\dot{z}^*(t)=[\dot{z}_1^{T}, T_2AT_2^T z_2^{*T},0^T]^T=A_z z^*(t) $, namely $z^*(t)$ is a solution of system (\ref{3PartTrans1}). However $x^*(t)=T^Tz^*(t) \notin \mathbf{V}^*$, which is a contradiction.
\hspace*{\fill}$\square$

\section{All graphs fulfilling Assumption~\ref{ass:GD:graphSymmetry}} \label{sec:AppendixGraph}
For tetrahedron $\{3,3\}$, the only possible graph satisfying Assumption~\ref{ass:GD:graphSymmetry} is the complete graph $\mathcal{K}_{4}$.
For Octahedron $\{3,4\}$, there exist two possible graphs, which are $\mathcal{K}_{6}$ and Platonic graph $\mathcal{P}_{\{3,4\}}$. In the case of cube $\{4,3\}$, beyond $\mathcal{K}_{8}$, $\mathcal{P}_{\{4,3\}}$ and the graph in Fig.~\ref{Fig:GD:Graph_N8}, there exist two other possible graphs shown in Fig.~B.\ref{Fig:Apen:Graph8_1} and Fig.~B.\ref{Fig:Apen:Graph8_2}.
When $N_0=12$ for icosahedron $\{3,5\}$, two graphs fulfilling Assumption~\ref{ass:GD:graphSymmetry} are listed in Fig.~B.\ref{Fig:Apen:Graph12_1} and Fig.~B.\ref{Fig:Apen:Graph12_2}
  other than two trivial graphs $\mathcal{K}_{12}$, $\mathcal{P}_{\{3,5\}}$. In the case for dodecahedron $\{5,3\}$, similarly $\mathcal{K}_{20}$, $\mathcal{P}_{\{5,3\}}$ and the graph in Fig.~\ref{Fig:GD:Graph_N20} are possible graphs. In addition to these three graphs, other $30$ connected graphs also fulfill the symmetries in Assumption~\ref{ass:GD:graphSymmetry}. Due to limit of space, here we sacrifice their detailed list.
\begin{figure}[ht]
    \centering
    \subfigure[$N_0=8$]{%
        \includegraphics[width=0.11\textwidth]{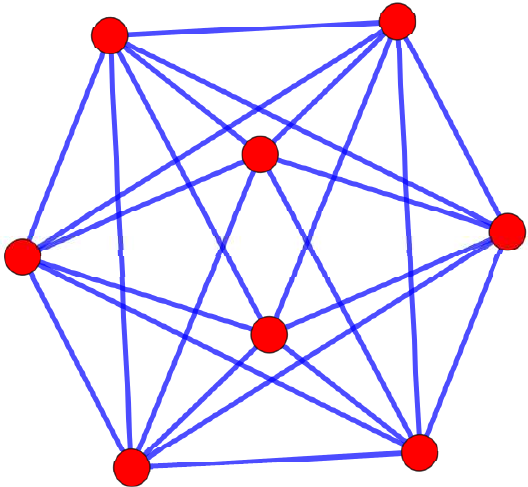}
        \label{Fig:Apen:Graph8_1}}\!
    \subfigure[$N_0=8$]{%
        \includegraphics[width=0.11\textwidth]{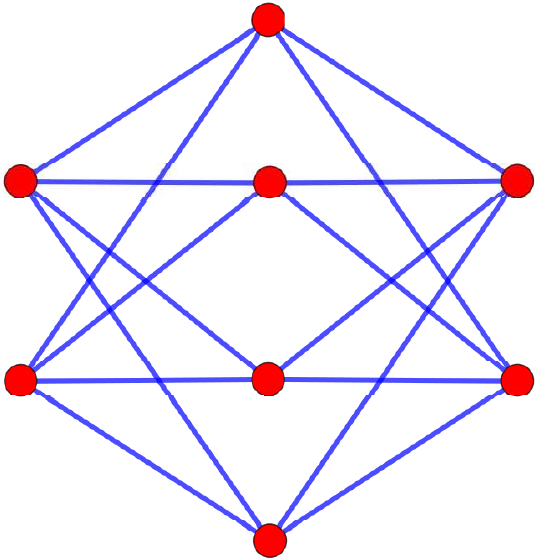}
        \label{Fig:Apen:Graph8_2}}\!
    \subfigure[$N_0=12$]{%
        \includegraphics[width=0.11\textwidth]{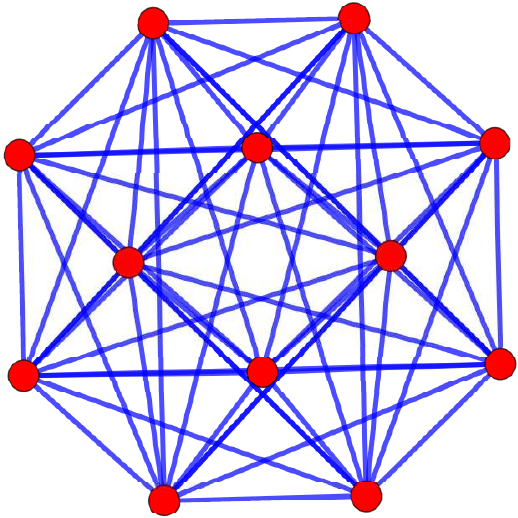}
        \label{Fig:Apen:Graph12_1}}\!
    \subfigure[$N_0=12$]{%
        \includegraphics[width=0.11\textwidth]{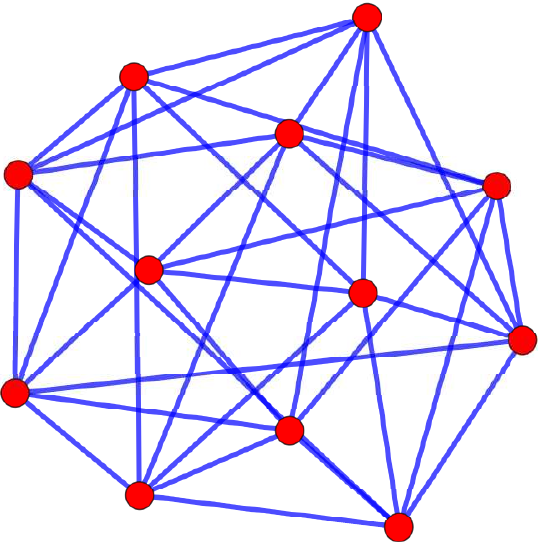}
        \label{Fig:Apen:Graph12_2}}

    \caption{List of Possible Graphs }
    \label{Fig:Int:PossibleGraphs}
\end{figure}

\section{Platonic Solids}
\begin{figure}[H]
    \centering
    \subfigure[$\{3,3\}$,\;Tetrahedron]{%
        \includegraphics[width=0.14\textwidth]{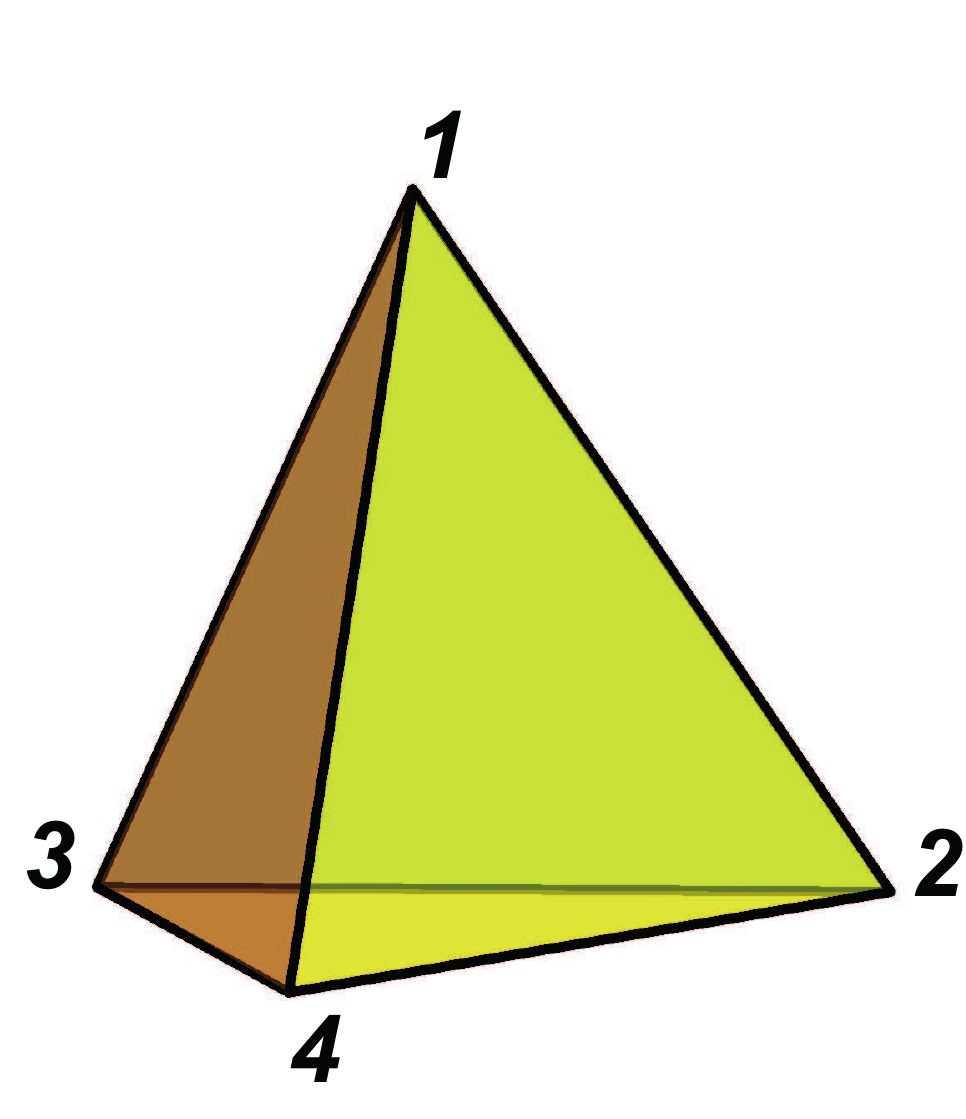}
        \label{Fig:Int:Tetrahedron}}\,
    \subfigure[$\{3,4\}$,\;Octahedron]{%
        \includegraphics[width=0.14\textwidth]{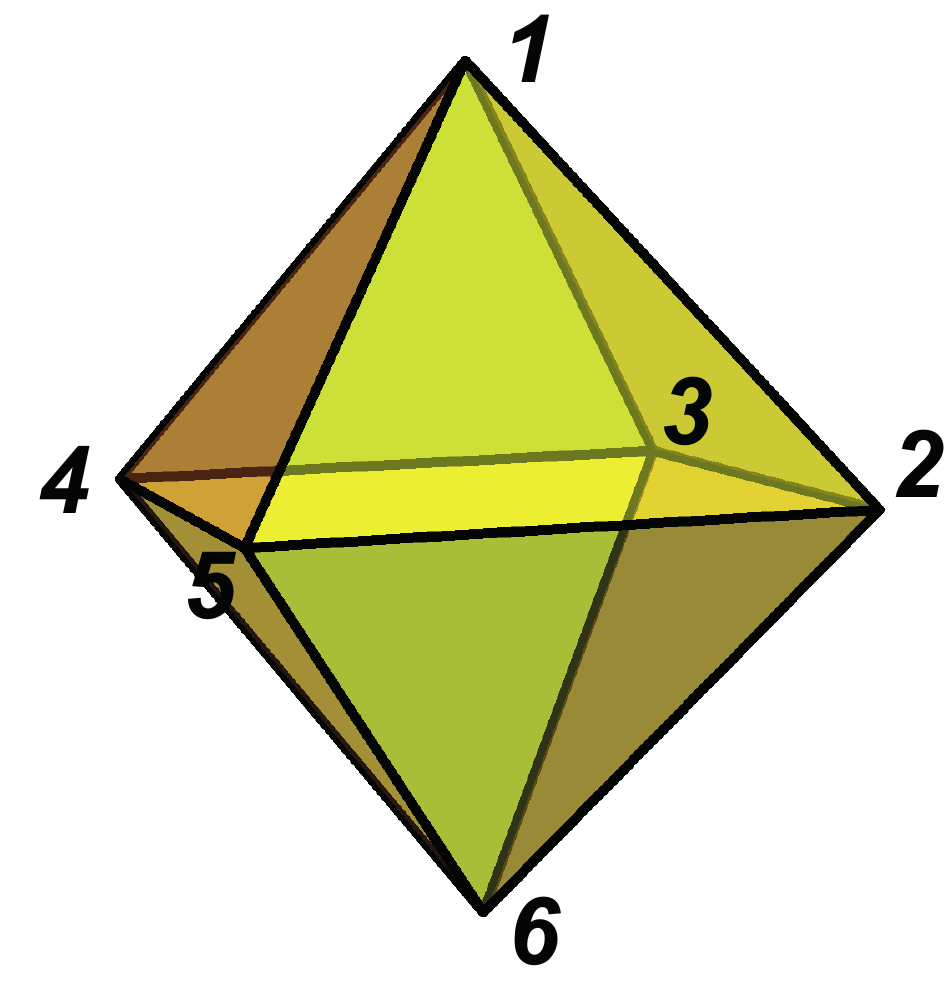}
        \label{Fig:Int:Octahedron}} \,
    \subfigure[$\{3,5\}$,\;Icosahedron]{%
        \includegraphics[width=0.14\textwidth]{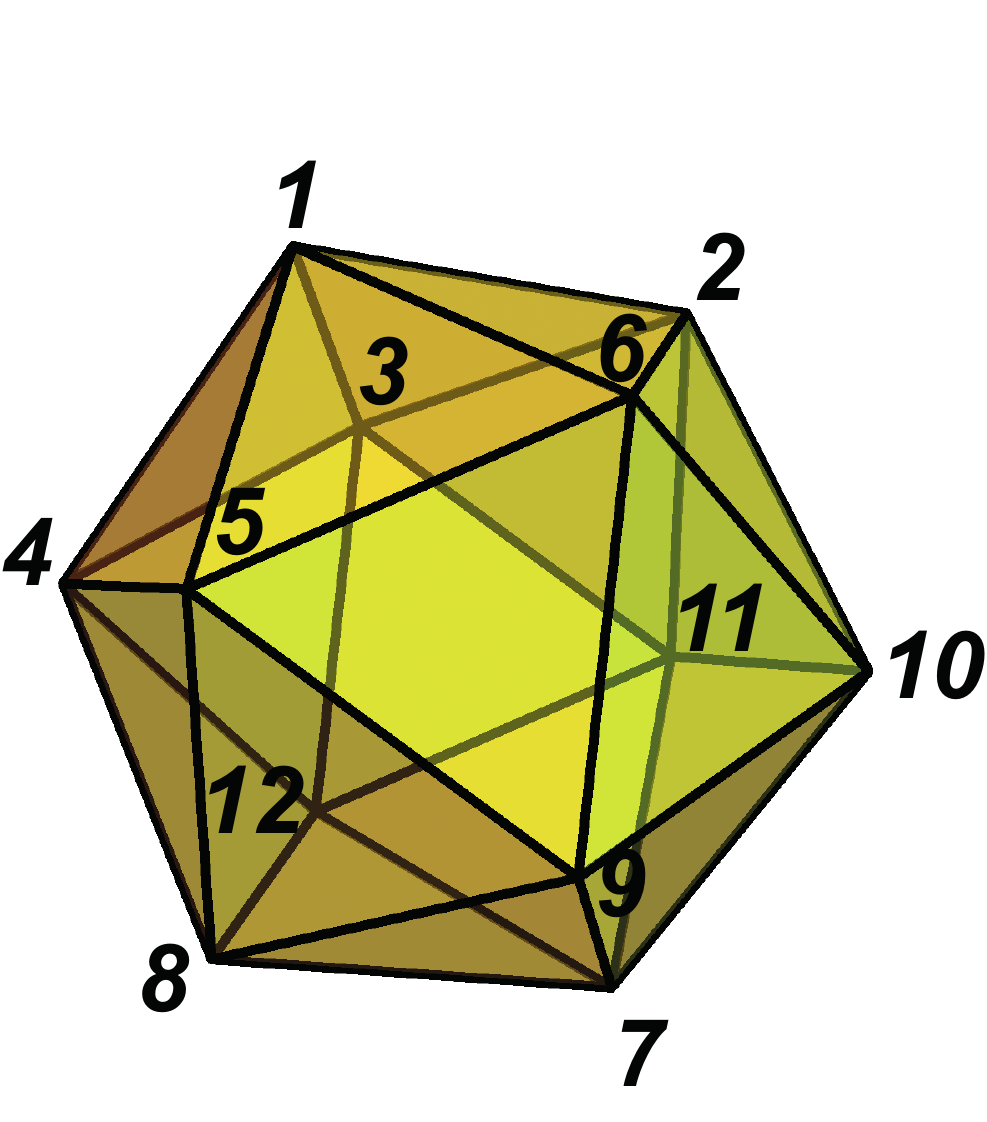}
        \label{Fig:Int:Icosahedron}}
    \subfigure[$\{4,3\}$,\;Cube]{%
        \includegraphics[width=0.14\textwidth]{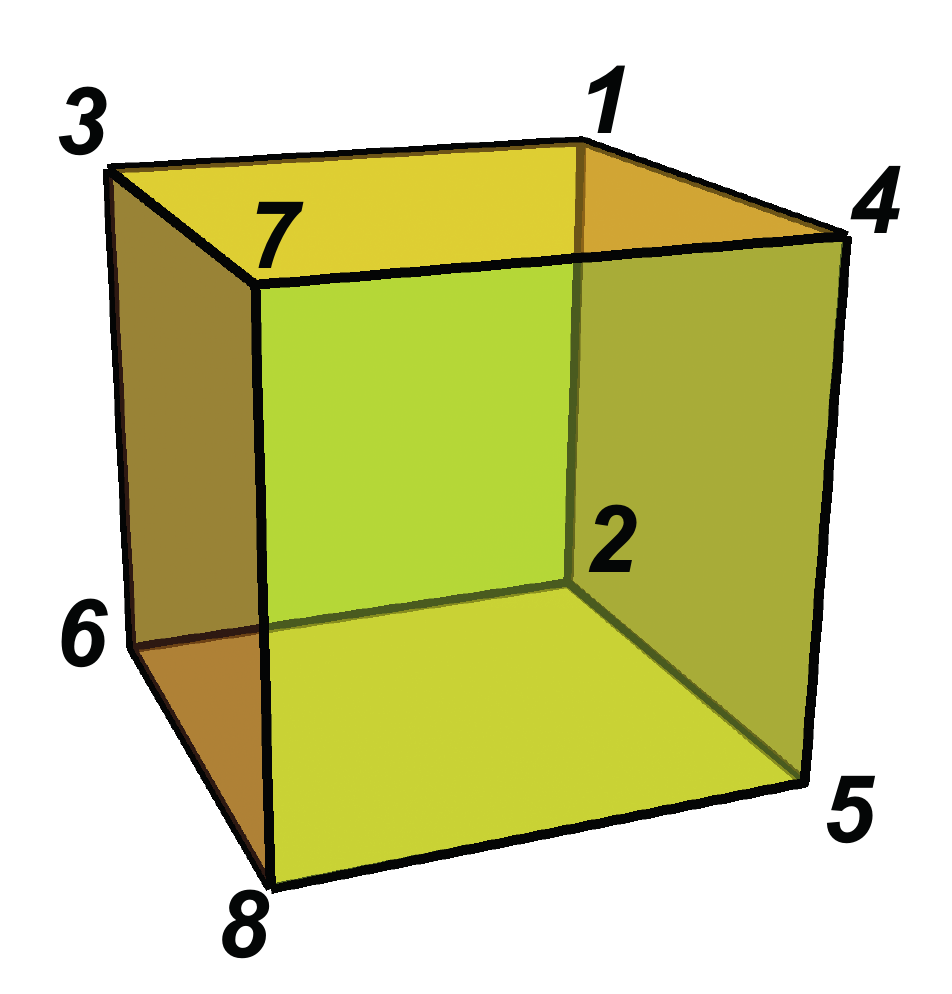}
        \label{Fig:Int:Cube}} \;
    \subfigure[$\{5,3\}$,\;Dodecahedron]{%
        \includegraphics[width=0.14\textwidth]{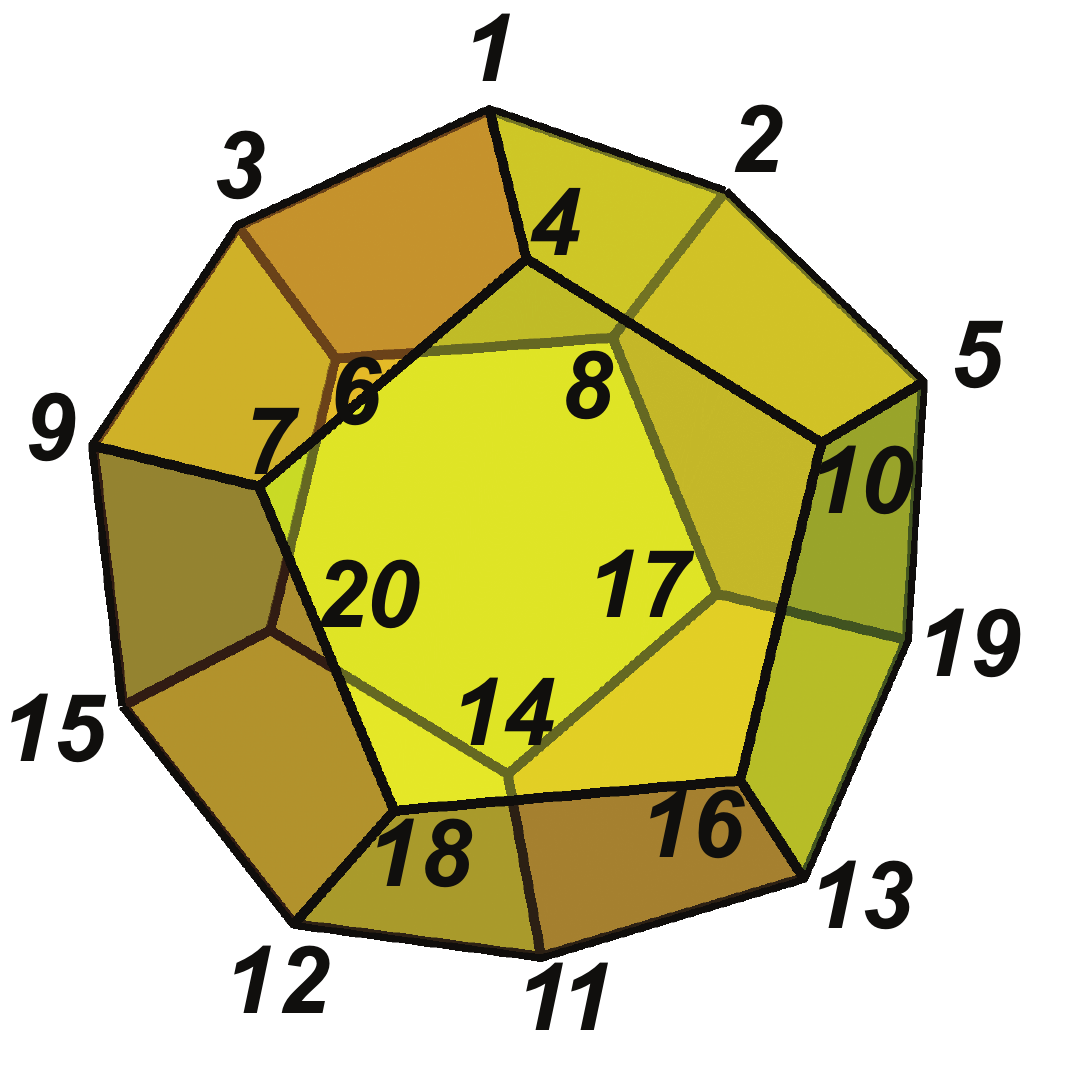}
        \label{Fig:Int:Dodecahedron}}

    \caption{Five Platonic Solids. }
    \label{Fig:Int:PlatonicSolid}
\end{figure}

\end{document}